\documentclass[reqno]{amsart}
\usepackage{amscd}
\usepackage{epsfig}  
\usepackage[utf8]{inputenc}
\usepackage{latexsym,amssymb}
\usepackage{amsfonts}
\usepackage[T1]{fontenc}
\usepackage{amsthm}
\usepackage{amsmath,multicol,multirow,eso-pic}
\usepackage{graphicx}
\usepackage{color}
\usepackage{arcs}
\usepackage{float}
\usepackage{tikz}
\usepackage{tkz-tab}
\usepackage{array}
% set various sizes:
%\setlength{\textwidth}{6.5in}
%\setlength{\textheight}{8.7in}
%\setlength{\voffset}{-.7in}
%\setlength{\oddsidemargin}{0in}

% THEOREMS ---------------------------------------------------------------
\theoremstyle{plain}
\newtheorem{thm}{Theorem}[section]

\newtheorem{lem}[thm]{Lemma}
\newtheorem{prop}[thm]{Proposition}
\theoremstyle{definition}
\newtheorem{defn}{Definition}[section]

\numberwithin{equation}{section}

\setcounter{secnumdepth}{3}
%%% ----------------------------------------------------------------------

\begin{document}

\title[]{REPRESENTATIONS OF THE NECKLACE BRAID GROUP ${\mathcal{NB}}_n$ OF DIMENSION 4 ($n=2,3,4$)}

\author{Taher I. Mayassi \and Mohammad N. Abdulrahim }

\address{Taher I. Mayassi\\
         Department of Mathematics and Computer Science\\
         Beirut Arab University\\
        P.O. Box 11-5020, Beirut, Lebanon}
\email{tim187@student.bau.edu.lb}

\address{Mohammad N. Abdulrahim\\
         Department of Mathematics and Computer Science\\
         Beirut Arab University\\
         P.O. Box 11-5020, Beirut, Lebanon}
\email{mna@bau.edu.lb}

\begin{abstract}
We consider the irreducible representations each of dimension 2 of the necklace braid group $\mathcal{NB}_n$ ($n=2,3,4$). We then consider the tensor product of the representations of $\mathcal{NB}_n$ ($n=2,3,4$) and determine necessary and sufficient condition under which the constructed representations are irreducible. Finally, we determine conditions under which the irreducible representations of $\mathcal{NB}_n$ ($n=2,3,4$) of degree 2 are unitary relative to a hermitian positive definite matrix.
\end{abstract}
\maketitle

\section{Introduction}
\bigskip
In \cite{Bulli}, Alex Bullivant, Andrew Kimball, Paul Martin and Eric C. Rowell studied representations of the necklace braid group $\mathcal{NB}_n$, especially those obtained as extensions of representations of the braid group $B_n$ and the loop braid group $LB_n$ (see \cite{Brui} and\cite{Kad}). They showed that any irreducible $B_n $ representation extends to $\mathcal{NB}_n$  in a standard way. Moreover, they proved that any local representation of $B_n$, coming from a braided vector space, can be extended to $\mathcal{NB}_n$. \\
A link $\mathcal{L}_n=K_0\cup K_1\cup\cdots\cup K_n$ is called a necklace if:
\begin{itemize}
\item $K_0$ is an Euclidean circle of center $O$ and of radius 1
\item each $K_i$ is an Euclidean circle whose center $O_i$ belongs to $K_0$ and radius $r_i$ such that $0<r_i<\frac{1}{2}$ for $0<i\leqslant n$
\item the plane of each $K_i$ is the one containing the line $(OO_i)$ and perpendicular to the plane of $K_0$.
\item If $O_i=O_j$ then $r_i\neq r_j$
\begin{figure}[h]
\hfil\epsfig{file=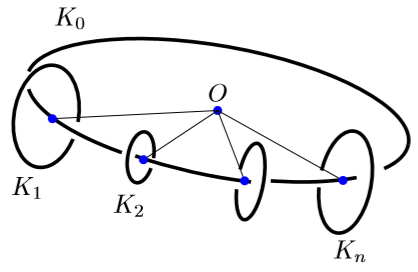, width=2in}\hfil
\caption{\label{fig} Necklace $\mathcal{L}_n$}
\end{figure}
\end{itemize}
\renewcommand{\thefootnote}{}
\footnote{\textit{Key words and phrases.} Necklace braid group, tensor, irreducible, unitary.}
\footnote{\textit{Mathematics Subject Classification.} Primary: 20F36.}
\vskip 0.1in 
\indent The motion group $\mathcal{NB}_n$, {\em the necklace braid group}, as described in \cite{Belli} is identified with the fundamental group of the configuration space $\mathcal{L}_n$. The group $\mathcal{NB}_n$ is generated by the elements $\sigma_1,\cdots,\sigma_n$ and $\tau$ where $\sigma_i$ is the motion, up to homotopy, of passing the $i$ th circle through the $(i+1)$th circle, while $\tau$ corresponds to shifting each circle one position in the counterclockwise direction.\\
In section 2, we consider the irreducible representations defined on the necklace braid group ${\mathcal{NB}}_n$ ($n=2,3,4$) each of dimension 2. In section 3, we construct the tensor product of  representations on ${\mathcal{NB}}_n$. Then we discuss the irreducibility of the tensor product of the representations of $\mathcal{NB}_n$ ($n=2,3,4$). Theorem 3.4 gives necessary and sufficient condition for the irreducibility of  tensor product of the representations of $\mathcal{NB}_4$. Theorem 3.16 provides necessary and sufficient conditions under which the tensor product of the representations of $\mathcal{NB}_3$ is irreducible. Theorem 3.17 gives necessary and sufficient condition for the irreducibility of the tensor product of the representations of $\mathcal{NB}_2$.  In section 4, we prove that the irreducible representations of dimension 2 of $\mathcal{NB}_n$ are unitary relative to herimitian positive definite matrix in the case $n=3,4$ (see Proposition 4.2, Proposition 4.3 and Proposition 4.4).\\
\begin{defn}\cite{Belli}
The necklace braid group $\mathcal{NB}_n$ is identified with the fundamental group of the configuration space of $\mathcal{L}_n$.
\end{defn}
The following theorem gives a presentation of the necklace braid group $\mathcal{NB}_n$ by generators and relations. 
\begin{thm}\cite{Belli}
The necklace braid group ${\mathcal{NB}}_n$ has a presentation by generators $\sigma_1,...,\sigma_n,\tau$ and relations as follows:
\begin{itemize} 
\item[(B1)] $\sigma_i \sigma_{i+1} \sigma_i  = \sigma_{i+1} \sigma_i \sigma_{i+1}$ for $1\leqslant i\leqslant n$;
\item[(B2)] $\sigma_i \sigma_j = \sigma_j \sigma_i   \text{ for } |i-j|\neq 1$;
\item[(N1)] $\tau\sigma_i\tau^{-1}=\sigma_{i+1}$  for $1\leqslant i \leqslant n~( \text{ mod } n)$;
\item[(N2)] $\tau^{2n}=1$
\end{itemize}
\end{thm}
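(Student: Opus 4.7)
The plan is to verify the presentation geometrically by (i) exhibiting the generators and relations as homotopy classes of motions in the configuration space of $\mathcal{L}_n$, and then (ii) proving that no further relations are needed, i.e.\ that the abstract group with this presentation surjects onto $\pi_1$ of the configuration space with an isomorphism. Since $\mathcal{NB}_n$ is defined as this fundamental group, one direction is a purely topological verification and the other requires a structural identification.

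First I would show that (B1), (B2), (N1), (N2) actually hold in $\mathcal{NB}_n$. For (B1) and (B2), note that each $\sigma_i$ is a local motion supported in a neighborhood of $K_i\cup K_{i+1}$; when $|i-j|\neq 1$ the supports are disjoint, so (B2) follows at once, while (B1) is the classical Reidemeister-type braid relation obtained by factoring the three-circle motion through the standard Yang--Baxter homotopy, exactly as for $B_n$. For (N1), observe that $\tau\sigma_i\tau^{-1}$ performs the motion $\sigma_i$ in the cyclically shifted labeling, which by construction equals $\sigma_{i+1}$ (with indices mod $n$). The relation (N2) is the most delicate of the four: the motion $\tau^n$ rigidly revolves the necklace once around the core circle $K_0$, which is a full loop in $SO(3)$; since $\pi_1(SO(3))\cong\mathbb{Z}/2$, this loop has order $2$, giving $\tau^{2n}=1$. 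I would justify this carefully by exhibiting an explicit null-homotopy of $\tau^{2n}$ using the belt-trick, making sure the homotopy respects the pairwise-distinct-radius condition.

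Next comes the completeness step, which I expect to be the main obstacle. The natural strategy is to identify $\mathcal{NB}_n$ with a semidirect-product-like group whose presentation is known. One would try to exhibit $\mathcal{NB}_n$ as an extension
\[
1\longrightarrow N\longrightarrow \mathcal{NB}_n\longrightarrow \mathbb{Z}/2n\longrightarrow 1,
\]
where the quotient is generated by the image of $\tau$ and $N$ is the normal closure of $\sigma_1,\dots,\sigma_n$, then recognize $N$ (via (N1)) as a suitable quotient of an affine-type braid group. Concretely, I would fibre the configuration space of $\mathcal{L}_n$ over the configuration space of the centers $\{O_1,\dots,O_n\}$ on $K_0$, use the long exact homotopy sequence together with Van Kampen applied to a cell decomposition of the fibre, and read off generators and relations. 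The key combinatorial point is to prove that the only relations among the $\sigma_i$'s induced by the topology are the braid relations and those arising from conjugation by $\tau$, and that the only relation involving $\tau$ alone is $\tau^{2n}=1$.

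The hard part, as indicated above, is the completeness of the relations, particularly ruling out any exotic relation of the form $w(\sigma_1,\dots,\sigma_n,\tau)=1$ not already a consequence of (B1)--(N2); a clean way to finish is to construct a faithful action of the abstractly presented group on a space with the correct homotopy type (for instance, a thickening of the necklace) and compare with the defining action of $\mathcal{NB}_n$. Once this identification is made, the theorem follows. Since the statement is cited from~\cite{Belli}, I would at this stage defer the detailed verification of the fibration and cell-decomposition arguments to that reference.
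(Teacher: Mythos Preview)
The paper does not prove this theorem at all: it is stated with the citation \cite{Belli} and no proof is given in the text. So there is nothing to compare your argument against; the paper's ``proof'' is simply a reference to Bellingeri--Bodin.

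Your proposal goes well beyond this by sketching how the presentation might actually be established. The first half (verifying that (B1), (B2), (N1), (N2) hold as motions) is fine and standard. The second half---completeness of the relations---is, as you yourself note, the hard part, and your outline (extension by $\mathbb{Z}/2n$, fibration over the configuration of centres, long exact sequence plus Van Kampen) is a plausible strategy but not a proof: you do not actually carry out the identification of the kernel or the cell decomposition, and your final sentence explicitly defers to \cite{Belli}. So in the end your proposal lands in the same place the paper does, namely citing the reference, just with more preliminary scaffolding. If your goal is to match the paper, a single sentence pointing to \cite{Belli} suffices; if your goal is an independent proof, the completeness step needs to be filled in rather than deferred.
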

\vspace{-0.09cm}
Here indices are taken modulo $n$, with $\sigma_{n+1}=\sigma_1$ and $\sigma_0=\sigma_n$. The relations (B1) and (B2) are those for the braid group $B_n$ (see \cite{Bir}).
\vspace{.4cm}
\section{Irreducible Representations of dimension 2 of $\mathcal{NB}_n$ ($n=2,3,4$)}
\bigskip
Consider the necklace braid group $\mathcal{NB}_n$ for $n=2,3,4$. We have the following proposition (see \cite{Bulli}).
\begin{prop}
Any irreducible representation of dimension 2 of  $\mathcal{NB}_n$ ($n=2,3,4$) is isomorphic to the representation $\rho=\rho(T,t,a,c,d)$ that is defined by:
$$\rho(\tau)=\left(\begin{matrix}
Tt&0\\
0&t
\end{matrix}\right) \text{ and } 
\rho(\sigma_i)=\left(\begin{matrix}
a&T^{i-1}\\
cT^{1-i}&d
\end{matrix}\right) $$

with the following conditions:

\vspace{.3cm}
\renewcommand{\arraystretch}{2}
\begin{tabular}{|c|c|c|}
\hline 
$n$ & $T$ & Conditions \\ 
\hline 
2 & $T=-1$ &  $c=a^2-ad+d^2$, $c\neq0$ and $a\neq d$ \\
\hline
3 & 
$T=e^{\pm i2\pi/3}$ & $a=\omega d$, $c\neq0$, $c\neq \omega d^2$ and $\omega=e^{\pm i\pi/3}$\\  
\hline
 \multirow{2}{*}{4}& $T=-1$ &  $c=a^2-ad+d^2$, $c\neq0$ and $a\neq d$ \\
\cline{2-3} 
 & $T=\pm i$ &  $c=-d^2$, $a=d\neq0$ \\
\hline 
\end{tabular}

\smallskip
\noindent Here $t$ is any $(2n)^{th}$ root of unity.
  
\end{prop}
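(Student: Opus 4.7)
The plan is to start from an abstract irreducible representation $\rho$, use (N2) and Schur's lemma to diagonalize $\rho(\tau)$, propagate that normalization to every $\rho(\sigma_i)$ via (N1), and then read off the table's constraints from the braid relation (B1) and, for $n=4$, the commutation (B2).

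Step 1 (normal form). Since $\tau^{2n}=1$, $\rho(\tau)$ is diagonalizable with eigenvalues among the $2n$-th roots of unity. If its two eigenvalues coincided, $\rho(\tau)$ would be a scalar; (N1) would then force $\rho(\sigma_{i+1})=\rho(\sigma_i)$ for every $i$, so the image of $\rho$ would be generated (as an algebra) by a single non-scalar $2\times 2$ matrix together with scalars and would therefore preserve a line, contradicting irreducibility. Thus one may choose a basis in which $\rho(\tau)=\mathrm{diag}(Tt,t)$ with $T\neq 1$ and $t$ a $2n$-th root of unity. Writing $\rho(\sigma_1)=\left(\begin{smallmatrix} a & b\\ c & d\end{smallmatrix}\right)$, relation (N1) immediately gives $\rho(\sigma_i)=\left(\begin{smallmatrix} a & T^{i-1}b\\ T^{1-i}c & d\end{smallmatrix}\right)$. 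Irreducibility forces $b\neq 0$ and $c\neq 0$ (else a coordinate line is invariant), and the residual diagonal conjugations (those that commute with $\rho(\tau)$) rescale $b$ to $1$, absorbing it into $c$. The cyclic convention $\sigma_{n+1}=\sigma_1$ forces $T^{n}b=b$ and $T^{-n}c=c$, hence $T^{n}=1$; combined with $T\neq 1$ this isolates $T=-1$ for $n=2$, $T=e^{\pm2\pi i/3}$ for $n=3$, and $T\in\{-1,\pm i\}$ for $n=4$.

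Step 2 (braid relation). The core calculation is to expand $\rho(\sigma_1)\rho(\sigma_2)\rho(\sigma_1)=\rho(\sigma_2)\rho(\sigma_1)\rho(\sigma_2)$ entry by entry. The two diagonal entries agree identically, and the two off-diagonal entries both reduce, after dividing by the nonzero factor $T-1$, to the single scalar identity
\begin{equation*}
T(a^{2}-ad+d^{2})+c(T^{2}+T+1)=0.
\end{equation*}
If $T^{2}+T+1\neq 0$ this solves for $c$: substituting $T=-1$ gives $c=a^{2}-ad+d^{2}$, while $T=\pm i$ gives $c=-(a^{2}-ad+d^{2})$. If $T^{2}+T+1=0$ (the $n=3$ case), the identity degenerates to $a^{2}-ad+d^{2}=0$, equivalent to $a=\omega d$ with $\omega=e^{\pm i\pi/3}$, and $c$ remains a free parameter.

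Step 3 ($n=4$ and nondegeneracy). For $n=4$, the commutation $\rho(\sigma_1)\rho(\sigma_3)=\rho(\sigma_3)\rho(\sigma_1)$ from (B2) expands to four entries whose only non-trivial content (given $T^{4}=1$) is $(T^{2}-1)(a-d)=0$. This is vacuous when $T=-1$, but forces $a=d$ when $T=\pm i$; substituting into the braid identity then yields $c=-d^{2}$. The remaining strict inequalities in the table --- $c\neq 0$, $a\neq d$, $c\neq\omega d^{2}$, $d\neq 0$ --- are precisely the conditions that $\det\rho(\sigma_1)=ad-c$ is nonzero (so each $\rho(\sigma_i)$ is invertible) and that the pair $\{\rho(\tau),\rho(\sigma_1)\}$ has no common eigenvector (so $\rho$ is irreducible); each inequality is pinned down case by case from these two requirements.

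The main obstacle I anticipate is the braid-relation expansion in Step 2: a priori it produces four polynomial equations in $a,c,d,T$, and the whole classification hinges on the nontrivial fact that three of them collapse to the same scalar identity. The derivation of $T^{n}=1$ from cyclicity and the (B2) check for $n=4$ are essentially formal once the diagonal setup is in place, and the nondegeneracy analysis is short case work.
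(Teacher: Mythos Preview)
Your proposal is correct and follows essentially the same route as the paper's proof: diagonalize $\rho(\tau)$ via (N2), propagate to all $\rho(\sigma_i)$ via (N1), extract $T^n=1$ from cyclicity, reduce (B1) to the single scalar identity $T(a^2-ad+d^2)+c(1+T+T^2)=0$, and for $n=4$ read off $a=d$ from (B2) when $T=\pm i$. Your treatment is in places slightly more explicit than the paper's (your justification that $\rho(\tau)$ cannot be scalar, and your compact form $(T^2-1)(a-d)=0$ for the (B2) constraint), but the architecture and every key step coincide.
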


\begin{proof}
Let $(\rho,V)$ be an irreducible representation of dimension 2 of $\mathcal{NB}_n$ (for $n=2,3,4$).
From the fact that $\tau$ has order $2n$, we may choose a basis for $V$ such that $\rho(\tau) =
\left(\begin{matrix}
t_1&0\\
0&t_2
\end{matrix}\right)$,
where $t_1$ and $t_2$ are $(2n)$-th roots of unity. Since $\rho$ is irreducible representation, it follows that $t_1\neq t_2$. Similarly we have that $\rho(\sigma_1)$ is neither upper nor lower triangular, because $(1,0)$ or $(0,1)$ would generate an invariant subspace. Due to rescaling, we may assume that $\rho(\sigma_1) =
\left(\begin{matrix}
a&1\\
c&d
\end{matrix}\right)$. Since
$\rho(\sigma_1)$ is neither diagonal nor triangular, we have that $c\neq 0$. To require invertiblity, we assume that $ad\neq c$.
\newline Using $\rho(\sigma_1)=\left(\begin{matrix}
a&1\\
c&d
\end{matrix}\right)$ and the condition (N1): $\tau\sigma_i\tau^{-1}=\sigma_{i+1}$, we get
$$\rho(\sigma_i)=\left(\begin{matrix}
a&T^{i-1}\\
cT^{1-i}&d
\end{matrix}\right),$$
where $T=t_1t_2^{-1}$. Note that $T\neq1$ because $t_1\neq t_2$.
Set $t_2=t$ then $t_1=Tt$ and $\rho(\tau)$ has the form $\left(\begin{matrix}
Tt&0\\
0&t
\end{matrix}\right)$.
\newline Since $\sigma_{n+1}=\sigma_1$, we have $T^{n}=T^{-n}=1$. Thus $T$ is a primitive $n$-th root of unity.
\newline Now, we check the conditions (B1): $\sigma_i\sigma_{i+1}\sigma_i=\sigma_{i+1}\sigma_i\sigma_{i+1}$. By Lemma 1.2 in \cite{Bulli}, it is sufficient to check (B1) for $i=1$ (i.e. $\sigma_1\sigma_2\sigma_1=\sigma_2\sigma_1\sigma_2$ ).
\newline
By direct calculations, we get
$$\rho(\sigma_1)\rho(\sigma_2)\rho(\sigma_1)=\rho(\sigma_2)\rho(\sigma_1)\rho(\sigma_2)\Leftrightarrow(a^2-ad+d^2)T+c(1+T+T^2)=0.$$
For $n=2$, we have that $T=-1$ and $c=a^2-ad+d^2$. Under the assumption that $ad\neq c$, we have $a\neq d$.
\newline 
For $n=3$, we have that $T=e^{\pm2\pi i/3}$. Then $a=\omega d$ where $\omega=e^{\pm\pi i/3}$. Under the assumption that $ad\neq c$, we have $c\neq\omega d^2$.
\newline
For $n=4$, we have that $T=-1, \pm i$. 
\begin{itemize}
\item If $T=-1$ then $c=a^2-ad+d^2$ with $a\neq d$. 
\item If $T=\pm i$, then $c=-a^2+ad-d^2$. 
\end{itemize}
It remains to verify the condition (B2): $\sigma_i\sigma_j=\sigma_j\sigma_i$ for $|i-j|>1$ when $n=4$. By Lemma 1.2 in \cite{Bulli} we just check (B2) for $i=1 $ and $j=3$.\\
If $T=-1$ then it is clear that $\rho(\sigma_1)\rho(\sigma_3)=\rho(\sigma_3)\rho(\sigma_1)$.\\
If $T=\pm i$ then
\begin{align*}
\rho(\sigma_1)\rho(\sigma_3)=\rho(\sigma_3)\rho(\sigma_1)&\Longleftrightarrow \begin{pmatrix}
a&1\\
c&d
\end{pmatrix}
\begin{pmatrix}
a&-1\\
-c&d
\end{pmatrix}
=
\begin{pmatrix}
a&-1\\
-c&d
\end{pmatrix}
\begin{pmatrix}
a&1\\
c&d
\end{pmatrix}\\
&\Longleftrightarrow \begin{pmatrix}
a^2-c&-a+d\\
ca-dc&-c+d^2
\end{pmatrix}
=
\begin{pmatrix}
a^2-c&a-d\\
-ca+dc&-c+d^2
\end{pmatrix}\\
&\Longleftrightarrow a=d
\end{align*}
It follows that $c=a^2+ad-d^2=-d^2$.
\end{proof}
\vspace{.4cm}
\section{Representations of dimension 4 of $\mathcal{NB}_n$ ($n=2,3,4$)}
\bigskip
Consider two irreducible representations $\rho_1=\rho(T_1,t_1,a_1,c_1,d_1)$ and $\rho_2=\rho(T_2,t_2,a_2,c_2,d_2)$ of $\mathcal{NB}_n$ ($n=2,3,4$) each of dimension 2.
$$\rho_1(\tau)=\left(\begin{matrix}
T_1t_1&0\\
0&t_1
\end{matrix}\right) \text{ and } 
\rho_1(\sigma_i)=\left(\begin{matrix}
a_1&T_1^{i-1}\\
c_1T_1^{1-i}&d_1
\end{matrix}\right) $$
$$\rho_2(\tau)=\left(\begin{matrix}
T_2t_2&0\\
0&t_2
\end{matrix}\right) \text{ and } 
\rho_2(\sigma_i)=\left(\begin{matrix}
a_2&T_2^{i-1}\\
c_2T_1^{1-i}&d_2
\end{matrix}\right) $$
\begin{defn} Consider the tensor product $\rho_1\otimes\rho_2$ given by $\rho_1\otimes\rho_2(\alpha)=\rho_1(\alpha)\otimes\rho_2(\alpha),$ where $\alpha$ is a generator of $\mathcal{NB}_n$ ($n=2,3,4$). We get the following matrices for the generators $\tau$ and $\sigma_1$.
$$\rho(\tau)=t_1t_2\left(\begin{matrix}
T_1T_2&0&0&0\\
0&T_1&0&0\\
0&0&T_2&0\\
0&0&0&1
\end{matrix}\right)~~\text{ and  }~~
\rho(\sigma_1)=\left(\begin{matrix}
a_1a_2&a_1&a_2&1\\
a_1c_2&a_1d_2&c_2&d_2\\
c_1a_2&c_1&d_1a_2&d_1\\
c_1c_2&c_1d_2&d_1c_2&d_1d_2
\end{matrix}\right).$$
\end{defn}
\medskip
\subsection{Representations of $\mathcal{NB}_4$}

In this section, we study the irreduciblty of the representation $\rho$ of the necklace braid group $\mathcal{NB}_4$. Actually, we have four cases: (3.1.1) $T_1=T_2=-1$, (3.1.2) $T_1=T_2=\pm i$, (3.1.3) $T_1=-T_2=\pm i$ and (3.1.4) $T_1=-1, T_2=\pm i$.\\\\
In what follows, suppose that $a_1a_2d_1d_2\neq0$. 

\subsubsection{Case $T_1=T_2=-1$}

Direct computations show that the representation $\rho$ is given by
$$\rho(\tau)=
t_1t_2\left(\begin{matrix}
1&0&0&0\\
0&-1&0&0\\
0&0&-1&0\\
0&0&0&1
\end{matrix}\right),
~
\rho(\sigma_1)=\rho(\sigma_3)=\left(\begin{matrix}
a_1a_2&a_1&a_2&1\\
a_1c_2&a_1d_2&c_2&d_2\\
c_1a_2&c_1&d_1a_2&d_1\\
c_1c_2&c_1d_2&d_1c_2&d_1d_2
\end{matrix}\right)$$
and
$$
\rho(\sigma_2)=\rho(\sigma_4)=\left(\begin{matrix}
a_1a_2&-a_1&-a_2&1\\
-a_1c_2&a_1d_2&c_2&-d_2\\
-c_1a_2&c_1&d_1a_2&-d_1\\
c_1c_2&-c_1d_2&-d_1c_2&d_1d_2
\end{matrix}\right).$$

The eigenvalues of $\rho(\tau)$ are $\lambda_1=t_1t_2$  and $\lambda_2=-t_1t_2$. Both are of 
multiplicities 2. The corresponding eigenvectors are $\alpha_1e_1+\alpha_4e_4=(\alpha_1,0,0,\alpha_4) $ for $\lambda_1=t_1t_2$ and $\alpha_2e_2+\alpha_3e_3=(0,\alpha_2,\alpha_3,0)$ for $\lambda_2=-t_1t_2$, where $\{\alpha_1,\alpha_2,\alpha_3,\alpha_4\}\subset\mathbb{C}$.\\\\
We now determine conditions under which the representation $\rho$ is irreducible.
\begin{prop}
The representation $\rho:{\mathcal{NB}}_4\to GL(4,\mathbb{C})$ has no non-trivial proper invariant subspaces of dimension 
1 if and only if $$a_1a_2\neq d_1d_2~\text{ and }~a_1d_2\neq a_2d_1.$$
\end{prop}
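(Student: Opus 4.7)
The plan is to reduce to finding common eigenvectors of $\rho(\tau)$ and $\rho(\sigma_1)$. By the necklace relation $\rho(\sigma_{i+1})=\rho(\tau)\rho(\sigma_i)\rho(\tau)^{-1}$, any vector that is simultaneously an eigenvector of $\rho(\tau)$ and $\rho(\sigma_1)$ is automatically an eigenvector of every $\rho(\sigma_j)$ with the same $\sigma_1$-eigenvalue, and conversely a $1$-dimensional invariant subspace is spanned by such a common eigenvector. Since $\rho(\tau)$ has the two $2$-dimensional eigenspaces $V_+=\langle e_1,e_4\rangle$ and $V_-=\langle e_2,e_3\rangle$, the candidate must lie in one of them.

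For $v=\alpha_1 e_1+\alpha_4 e_4\in V_+$, imposing $\rho(\sigma_1)v=\mu v$ forces the $e_2$- and $e_3$-components of $\rho(\sigma_1)v$ to vanish, giving a homogeneous $2\times 2$ system in $(\alpha_1,\alpha_4)$; the symmetric candidate $v=\alpha_2 e_2+\alpha_3 e_3\in V_-$ produces a $2\times 2$ system in $(\alpha_2,\alpha_3)$ from the vanishing of the $e_1$- and $e_4$-components. A short computation shows that both systems share the same determinant $\Delta:=a_1c_2d_1-a_2c_1d_2$. The key algebraic step, where I expect most of the labor, is to substitute $c_i=a_i^2-a_id_i+d_i^2$ (valid in the present $T_1=T_2=-1$ case by Proposition 2.1) and verify the factorization
\[
\Delta=(a_2d_1-a_1d_2)(a_1a_2-d_1d_2).
\]
This immediately yields the necessary direction: if both $a_1a_2\neq d_1d_2$ and $a_1d_2\neq a_2d_1$ hold, then $\Delta\neq 0$, neither $2\times 2$ system has a non-trivial solution, and no common eigenvector exists.

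For sufficiency I must verify that when $\Delta=0$ the eigenvalue $\mu$ extracted from the remaining two ``diagonal'' coordinates of $\rho(\sigma_1)v=\mu v$ is consistent. If $a_1a_2=d_1d_2$, a direct calculation reduces this compatibility to an expression whose first factor is $(a_1a_2-d_1d_2)$, so it is automatic and an honest eigenvector lives in $V_+$; if instead $a_1d_2=a_2d_1$, a parallel computation in $V_-$ produces a compatibility whose first factor is $(a_1d_2-a_2d_1)$, again automatic. The main obstacle is the bookkeeping in these two factorizations: the constraint $a_i\neq d_i$ from Proposition 2.1 must be invoked precisely to kill a spurious alternate second factor that would otherwise blur the correspondence between the two hypothesis equalities and the two eigenspaces, but once that is dealt with the proof is complete.
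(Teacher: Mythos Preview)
Your approach is correct and noticeably cleaner than the paper's. Both arguments begin the same way: since $\rho(\tau)$ has eigenspaces $V_+=\langle e_1,e_4\rangle$ and $V_-=\langle e_2,e_3\rangle$, any one-dimensional invariant line must lie in one of these, and the problem reduces to finding a common eigenvector of $\rho(\tau)$ and $\rho(\sigma_1)$. From there the paper treats $V_+$ and $V_-$ entirely separately, each time writing out the full three-equation system $\rho(\sigma_1)v=\alpha v$ and grinding through a chain of algebraic equivalences (including the substitutions $c_i-a_id_i=(a_i-d_i)^2$ and a ratio argument requiring $a_i\neq d_i$) to arrive at the single equality $a_1a_2=d_1d_2$, respectively $a_1d_2=a_2d_1$. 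Your route is more structural: you isolate the $2\times 2$ ``off-diagonal vanishing'' subsystem, observe that its determinant is the \emph{same} quantity $\Delta=a_1d_1c_2-a_2d_2c_1$ in both $V_+$ and $V_-$, and verify the factorization $\Delta=(a_2d_1-a_1d_2)(a_1a_2-d_1d_2)$ after substituting $c_i=a_i^2-a_id_i+d_i^2$. This handles the direction ``both inequalities $\Rightarrow$ no invariant line'' in one stroke. For the converse you then check that the remaining diagonal compatibility in $V_+$ factors with $(a_1a_2-d_1d_2)$ as one factor, and in $V_-$ with $(a_1d_2-a_2d_1)$ as one factor, so the relevant hypothesis equality forces compatibility and produces an honest eigenvector. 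Two minor remarks: your labels ``necessary'' and ``sufficiency'' are swapped relative to how the biconditional is stated; and the constraint $a_i\neq d_i$ is not actually needed in your argument (it is used in the paper's version only to justify dividing by $(a_i-d_i)^2$), so your worry about a ``spurious alternate second factor'' is harmless---that factor never matters, since when both inequalities hold $\Delta\neq 0$ already blocks any candidate line before compatibility is even tested.
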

\begin{proof}
The subspaces of dimension 1 that are invariant under $\rho(\tau)$ are those spanned by one of the 
following vectors: $e_1,~e_2,~e_3,~e_4,~e_1+ye_4~\text{ and }e_2+x e_3~~\text{ for }x\neq0$ and $y\neq0$.\\
$\rho(\sigma_1)(e_1)=\left(\begin{matrix}
a_1a_2\\
a_1c_2\\
c_1a_2\\
c_1c_2
\end{matrix}\right)\not\in\langle e_1\rangle$ because,  $c_1c_2\neq0$.
So, $\langle e_1\rangle $ is not invariant.

$\rho(\sigma_1)(e_2)=\left(\begin{matrix}
a_1\\
a_1d_2\\
c_1\\
c_1d_2
\end{matrix}\right)\not\in\langle e_2\rangle$ because, $c_1\neq0$.
So, $\langle e_2\rangle$ is not invariant.

$\rho(\sigma_1)(e_3)=\left(\begin{matrix}
a_2\\
c_2\\
d_1a_2\\
d_1c_2
\end{matrix}\right)\not\in\langle e_3\rangle$ because,  $c_2\neq0$.
Thus, $\langle e_3\rangle$ is not invariant.

$\rho(\sigma_1)(e_4)=\left(\begin{matrix}
1\\
d_2\\
d_1\\
d_1d_2
\end{matrix}\right)\not\in\langle e_4\rangle$.
So, $\langle e_4\rangle$ is not invariant.\\

Now consider the subspaces of the form $\langle e_1+ye_4\rangle$ with $y\neq0$\\
Since $\rho(\sigma_i)(e_1+ye_4)=\left(\begin{matrix}
a_1a_2+y\\
\pm a_1c_2\pm d_2y\\
\pm c_1a_2\pm d_1y\\
c_1c_2+d_1d_2y
\end{matrix}\right)$, it follows that $\langle e_1+ye_4\rangle$ is invariant under $\rho(\sigma_i)$ for $1\leqslant i\leqslant n.$ It follows that
$\left(\begin{matrix}
a_1a_2+y\\
\pm a_1c_2\pm d_2y\\
\pm c_1a_2\pm d_1y\\
c_1c_2+d_1d_2y
\end{matrix}\right)=
\left(\begin{matrix}
\alpha\\
0\\
0\\
\alpha y
\end{matrix}\right)$ for some $\alpha\in\mathbb{C}\setminus\{0\}$.\\
 This is equivalent to the system:

\begin{align*}
 &\left\lbrace\begin{array}{l}
(a_1a_2+y)y= c_1c_2+d_1d_2y \\
 a_1c_2+d_2y=0\\
   c_1a_2+d_1y=0
\end{array}\right.\\\\
\Longleftrightarrow &
\left\lbrace \begin{array}{l}
y^2+(a_1a_2-d_1d_2)y-c_1c_2=0\\
y=-\dfrac{a_1c_2}{d_2}\\
y=-\dfrac{c_1a_2}{d_1}
\end{array}\right.
\end{align*}
\begin{align*}
\Longleftrightarrow &
\left\lbrace \begin{array}{l}
\dfrac{a_1^2c_2^2}{d_2^2}-\dfrac{a_1^2a_2c_2}{d_2}+d_1a_1c_2-c_1c_2=0\\
-\dfrac{a_1c_2}{d_2}=
-\dfrac{c_1a_2}{d_1}
\end{array}\right.\\\\
\Longleftrightarrow &\left\lbrace \begin{array}{l}
a_1^2c_2-a_1^2a_2d_2+d_1a_1d_2^2-c_1d_2^2=0\\
a_1d_1c_2=a_2d_2c_1
\end{array}\right.\\\\
\Longleftrightarrow &\left\lbrace \begin{array}{l}
a_1^2(c_2-a_2d_2)-d_2^2(c_1-a_1d_1)=0\\
a_1d_1(a_2^2-a_2d_2+d_2^2)=a_2d_2(a_1^2-a_1d_1+d_1^2)
\end{array}\right.\\\\
\Longleftrightarrow & \left\lbrace \begin{array}{l}
a_1^2(a_2-d_2)^2-d_2^2(a_1-d_1)^2=0\\
a_1d_1a_2^2-a_1d_1a_2d_2+a_1d_1d_2^2=a_2d_2a_1^2-a_2d_2a_1d_1+a_2d_2d_1^2
\end{array}\right.\\\\
\Longleftrightarrow & \left\lbrace \begin{array}{l}
a_1^2(a_2-d_2)^2-d_2^2(a_1-d_1)^2=0\\
a_1d_1a_2^2-2a_1d_1a_2d_2+a_1d_1d_2^2=a_2d_2a_1^2-2a_2d_2a_1d_1+a_2d_2d_1^2
\end{array}\right.\\\\
\Longleftrightarrow&\left\lbrace \begin{array}{l}
a_1^2(a_2-d_2)^2-d_2^2(a_1-d_1)^2=0\\
a_1d_1(a_2^2-2a_2d_2+d_2^2)=a_2d_2(a_1^2-2a_1d_1+d_1^2)
\end{array}\right.\\\\
\Longleftrightarrow&\left\lbrace \begin{array}{l}
a_1^2(a_2-d_2)^2=d_2^2(a_1-d_1)^2\\
a_1d_1(a_2-d_2)^2=a_2d_2(a_1-d_1)^2
\end{array}\right.\\\\
\Longleftrightarrow &\left\lbrace \begin{array}{l}
\dfrac{a_1^2}{d_2^2}=\dfrac{(a_1-d_1)^2}{(a_2-d_2)^2}\\
\dfrac{a_1d_1}{a_2d_2}=\dfrac{(a_1-d_1)^2}{(a_2-d_2)^2}
\end{array}\right.\\\\
\Longleftrightarrow &\dfrac{a_1}{d_2}=\dfrac{d_1}{a_2}\\\\
\Longleftrightarrow & a_1a_2=d_1d_2
 \end{align*}
We consider the subspaces of the form $\langle e_2+xe_3\rangle$ with $x\neq0$.\\\\
$\rho(\sigma_j)(e_2+xe_3)=\left(\begin{matrix}
\pm a_1\pm a_2x\\
a_1d_2+c_2x\\
c_1+d_1a_2x\\
\pm c_1d_2\pm d_1c_2x
\end{matrix}\right)$ for $1\leqslant j\leqslant 4$.\\
If we assume that the subspace $\langle e_2+xe_3\rangle$ is invariant, then 
$\left(\begin{matrix}
a_1+a_2x\\
a_1d_2+c_2x\\
c_1+d_1a_2x\\
c_1d_2+d_1c_2x
\end{matrix}\right)=
\left(\begin{matrix}
0\\
\alpha\\
\alpha x\\
0
\end{matrix}\right)$ for some $\alpha\in\mathbb{C}^*$.
This is equivalent to:
\begin{align*}
&\left\lbrace \begin{array}{l}
 a_1+a_2x=0,\\  c_1d_2+d_1c_2x=0, \\ a_1d_2+c_2x=\alpha \text{ and } \\
 c_1+d_1a_2x=\alpha x
 \end{array}
 \right.\\\\
 \Longleftrightarrow & x=-\dfrac{a_1}{a_2},~x=-\dfrac{c_1d_2}{d_1c_2},~\text{ and }~ c_2x^2+(a_1d_2-d_1a_2)x-c_1=0\\\\
 \Longleftrightarrow& c_2\dfrac{a_1^2}{a_2^2}-\dfrac{a_1^2d_2}{a_2}+d_1a_1-c_1=0 ~\text{ and }~ a_1c_2d_1=a_2c_1d_2\\\\
\Longleftrightarrow & c_2a_1^2-a_1^2d_2a_2+d_1a_1a_2^2-c_1a_2^2=0 ~\text{ and }~ a_1d_1(a_2^2-a_2d_2+d_2^2)=a_2d_2(a_1^2-a_1d_1+d_1^2)\\\\
\Longleftrightarrow & a_1^2(c_2-d_2a_2)=a_2^2(-d_1a_1+c_1) ~\text{ and }~ a_1d_1(a_2^2-2a_2d_2+d_2^2)=a_2d_2(a_1^2-2a_1d_1+d_1^2)\\\\
\Longleftrightarrow& a_1^2(a_2-d_2)^2=a_2^2(a_1-d_1)^2 ~\text{ and }~ a_1d_1(a_2-d_2)^2=a_2d_2(a_1-d_1)^2\\\\
\Longleftrightarrow  &\dfrac{a_1^2}{a_2^2}=\dfrac{(a_1-d_1)^2}{(a_2-d_2)^2}=\dfrac{a_1d_1}{a_2d_2}
\\\\
\Longleftrightarrow& \dfrac{a_1}{a_2}=\dfrac{d_1}{d_2}\\\\
\Longleftrightarrow & a_1d_2=a_2d_1.
\end{align*}
\end{proof}

\begin{prop} The representation $\rho:{\mathcal{NB}}_4\to GL(4,\mathbb{C})$ has no non-trivial proper invariant subspaces of dimension 2 if and only if $$a_1a_2\neq d_1d_2\text{ and }a_1d_2\neq a_2d_1.$$
\end{prop}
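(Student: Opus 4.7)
The plan is to mirror the strategy of Proposition 3.2: enumerate the 2-dimensional subspaces invariant under $\rho(\tau)$, then test each for invariance under $\rho(\sigma_1)$. Since $\rho(\tau)=t_1t_2\,\mathrm{diag}(1,-1,-1,1)$ has the two 2-dimensional eigenspaces $V_1=\langle e_1,e_4\rangle$ and $V_2=\langle e_2,e_3\rangle$, any such $W$ decomposes as $W=(W\cap V_1)\oplus(W\cap V_2)$, and hence falls into one of three families: $W=V_1$, $W=V_2$, or the ``mixed'' form $\langle v_1,v_2\rangle$ with $v_1\in V_1\setminus\{0\}$ and $v_2\in V_2\setminus\{0\}$. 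The two pure eigenspaces are ruled out by a single computation each, e.g.\ $\rho(\sigma_1)e_1=(a_1a_2,a_1c_2,c_1a_2,c_1c_2)^T\notin V_1$ since $a_1c_2,c_1a_2\neq 0$. Invariance under $\rho(\sigma_3)=\rho(\sigma_1)$ is automatic, and invariance under $\rho(\sigma_2)=\rho(\sigma_4)$ adds nothing new because the sign flips in the entries of $\rho(\sigma_2)$ cancel in the parallelism conditions imposed below.

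For the mixed family I would write $v_1=\alpha e_1+\beta e_4$ and $v_2=\gamma e_2+\delta e_3$, express $\rho(\sigma_1)$ in block form with respect to $V_1\oplus V_2$, and require that the $V_1$- and $V_2$-components of each of $\rho(\sigma_1)v_1$, $\rho(\sigma_1)v_2$ be proportional to $v_1$, $v_2$ respectively (using the uniqueness of the $V_1\oplus V_2$ decomposition). A short boundary check rules out $\alpha=0$ and $\gamma=0$, so setting $y=\beta/\alpha$ and $x=\delta/\gamma$ reduces $\rho(\sigma_1)$-invariance of $W$ to the four scalar equations
\begin{align*}
y^2+(a_1a_2-d_1d_2)y-c_1c_2&=0,\\
c_2x^2+(a_1d_2-d_1a_2)x-c_1&=0,\\
x(a_1c_2+y d_2)&=c_1a_2+y d_1,\\
y(a_1+x a_2)&=c_1d_2+x d_1c_2.
\end{align*}
The first two are exactly the quadratics governing the 1-dimensional invariant lines in the proof of Proposition 3.2, while the last two are bilinear coupling relations.

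The core of the proof is then to decide when this system admits a simultaneous solution. I would eliminate $x$ between the two coupling relations and substitute into the $y$-quadratic, and symmetrically eliminate $y$ and substitute into the $x$-quadratic; the simplification relies on the identities $c_i-a_id_i=(a_i-d_i)^2$ and $a_i^2-d_i^2=(a_i-d_i)(a_i+d_i)$ valid for $T_i=-1$. After extracting the non-vanishing factor $(a_1-d_1)(a_2-d_2)$, the surviving expression should collapse to a relation forcing either $a_1a_2=d_1d_2$ or $a_1d_2=a_2d_1$, which gives the ``only if'' direction. For the converse, whenever both equalities hold, the two 1-dimensional invariant lines produced by Proposition 3.2 in $V_1$ and $V_2$ span an invariant 2-plane; when only one equality holds, the relevant coupling coefficient vanishes at the corresponding root of the $y$- or $x$-quadratic, and a careful treatment of that degenerate sub-case yields the required 2-plane. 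The main obstacle is the bookkeeping in these eliminations: the resulting polynomials are bulky, and the cancellations that make them factor into the stated quantities depend on repeated use of the normalization $c_i=a_i^2-a_id_i+d_i^2$.
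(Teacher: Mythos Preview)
Your approach is essentially the same as the paper's: enumerate the $\rho(\tau)$-invariant $2$-planes via the eigenspace decomposition, rule out $V_1$ and $V_2$ directly, and reduce the mixed case $\langle e_1+ye_4,\,e_2+xe_3\rangle$ to the same four-equation system (your equations are exactly the paper's (3.1a)--(3.1d) with $x$ and $y$ interchanged). The only substantive difference is in how the system is solved: the paper solves the two quadratics first and then feeds the four $(x,y)$ candidates into the coupling relations using \textsc{Mathematica}, obtaining a list of degenerate cases that are then eliminated using $a_i\neq d_i$, $c_i\neq 0$, and $a_id_i\neq 0$; you instead propose to eliminate by hand starting from the coupling relations and to extract the factor $(a_1-d_1)(a_2-d_2)$ explicitly. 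Your route is more transparent but, as you note, the bookkeeping is heavy, and you should expect the same spurious factors (e.g.\ $d_1=e^{\pm i\pi/3}a_1$, which forces $c_1=0$) to appear and need to be discarded before the clean dichotomy $a_1a_2=d_1d_2$ or $a_1d_2=a_2d_1$ emerges. Your sketch of the converse (producing an explicit invariant $2$-plane when one equality holds) is somewhat more explicit than the paper's, which leaves that direction implicit in the Mathematica output.
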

\begin{proof}
The subspaces of dimension 2 that are possibly invariant are those spanned by the following subsets of vectors: $\{e_1,he_2+je_3\},~\{e_4,re_2+se_3\},~\{e_2,ke_1+me_4\},~\{e_3,pe_1+qe_4\},~\{e_1+xe_4,e_2+ye_3\}$, where $\{h,j,r,s,k,m,p,q,x,y\}\subset\mathbb{C}$.\\
$\rho(\sigma_1)(e_1)=\left(\begin{matrix}
a_1a_2\\
a_1c_2\\
c_1a_2\\
c_1c_2
\end{matrix}\right)\not\in\langle e_1,he_2+je_3\rangle$ because $c_1c_2\neq0$.
So $\langle e_1,he_2+je_3\rangle$ are not invariant subspaces for all $h,j\in\mathbb{C}$.\\
$\rho(\sigma_1)(e_4)=\left(\begin{matrix}
1\\
d_2\\
d_1\\
d_1d_2
\end{matrix}\right)\not\in\langle e_4,re_2+se_3\rangle$.
So, $\langle e_4,re_2+se_3\rangle$ are not invariant subspaces for all $r,s\in\mathbb{C}$.\\
$\rho(\sigma_1)(e_2)=\left(\begin{matrix}
a_1\\
a_1d_2\\
c_1\\
c_1d_2
\end{matrix}\right)\not\in\langle e_2,ke_1+me_4\rangle$ because $c_1\neq0$.
So, the subspaces $\langle e_2,ke_1+me_4\rangle$ are not invariant for all $k,m\in\mathbb{C}$.\\
$\rho(\sigma_1)(e_3)=\left(\begin{matrix}
a_2\\
c_2\\
d_1a_2\\
d_1c_2
\end{matrix}\right)\not\in\langle e_3,pe_1+qe_4\rangle$ because $c_2\neq0$.
So, the subspaces $\langle e_3,pe_1+qe_4\rangle$ are not invariant for all $p,q\in\mathbb{C}$.\\\\
Now consider the subspaces of the form $\langle e_1+xe_4,e_2+ye_3\rangle$ where $x,y\in\mathbb{C}^*.$\\
Put $u=e_1+xe_4$ and $v=e_2+ye_3$ then \\
$\rho(\sigma_j)(u)=\left(\begin{matrix}
a_1a_2+x\\
\pm a_1c_2\pm d_2x\\
\pm c_1a_2\pm d_1x\\
c_1c_2+d_1d_2x
\end{matrix}\right)$ 
and 
$\rho(\sigma_j)(v)=\left(\begin{matrix}
\pm a_1\pm a_2y\\
a_1d_2+c_2y\\
c_1+d_1a_2y\\
\pm c_1d_2\pm d_1c_2y
\end{matrix}\right).$\\\\
The subspace $\langle u,v\rangle$ is invariant if and only if $\rho(\sigma_j)(u)=\alpha u+\alpha'v$ and $\rho(\sigma_j)(v)=\beta u+\beta'v$ for some $\alpha, \alpha', \beta, \beta'\in\mathbb{C}$. \\\\
This is equivalent to 
$\left\{\begin{matrix}
a_1a_2+x=\alpha\\
\pm(a_1c_2+d_2x)=\alpha'\\
\pm(c_1a_2+d_1x)=\alpha'y\\
c_1c_2+d_1d_2x=\alpha x
\end{matrix}\right.$
and 
$\left\{\begin{matrix}
\pm(a_1+a_2y)=\beta\\
a_1d_2+c_2y=\beta'\\
c_1+d_1a_2y=\beta'y\\
\pm(c_1d_2+d_1c_2y)=\beta x
\end{matrix}\right..$ \\
By eliminating $\alpha,\alpha', \beta$ and $\beta'$, we get the following system.
\begin{subequations}
\begin{align}
c_1c_2+d_1d_2x=a_1a_2x+x^2\\
c_1a_2+d_1x=a_1c_2y+d_2xy\\
c_1d_2+d_1c_2y=a_1x+a_2xy\\
c_1+d_1a_2y=a_1d_2y+c_2y^2
\end{align}
\end{subequations}
The equations (3.1a) and (3.1d) lead to four solutions:
$$x=\dfrac{-a_1a_2+d_1d_2\pm\sqrt{4c_1c_+(a_1a_2-d_1d_2)^2}}{2};\quad y=\dfrac{a_2d_1-a_1d_2\pm\sqrt{4c_1c_+(a_2d_1-a_1d_2)^2}}{2c_2}$$
Substitute each of these solutions into the equations (3.1b) and (3.1c) then, by using Mathematica, we get the following relations:\\
$(a_2=d_2),~~~(a_1=d_1),~~~(d_1=e^{\pm\pi i/3}a_1), ~~~ (d_1d_2=a_1a_2),~~~ (d_1a_2=a_1d_2),~~~ (a_1=a_2=0),~~~(a_1=d_2=0),~~~(d_1=a_2=0),~~~(d_1=d_2=0),~~~(d_1=a_1,a_2=0)~~~(d_1=a_1,d_2=0),~~~(d_1=e^{\pm\pi i/3}a_1,a_2=0)$ or $(d_1=e^{\pm\pi i/3}a_1,d_2=0)$\\
But we have $a_1\neq d_1,~~ a_2\neq d_2,~~ a_1a_2\neq d_1d_2,~~ a_1d_2\neq a_2d_1~$ and $a_1d_1a_2d_2\neq0$. So, the only 2 relations left are  $d_1=e^{\pm\pi i/3}a_1$ which lead to $c_1=a_1^2-a_1d_1+d_1^2=0$. This contradicts the fact that $c_1\neq 0$.\\
Therefore $\langle u,v\rangle$ is not invariant under $\rho(\sigma_j)$ for $1\leqslant j\leqslant4$.

\end{proof}

\begin{prop}
$\rho:{\mathcal{NB}}_4\to GL(4,\mathbb{C})$ has no non-trivial proper invariant subspaces of dimension 3 if and only if 
$$a_1a_2\neq d_1d_2\text{ and }a_1d_2\neq a_2d_1.$$
\end{prop}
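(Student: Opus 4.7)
The plan is to exploit duality: a $3$-dimensional subspace $W\subset\mathbb{C}^{4}$ is $\rho$-invariant if and only if its annihilator $W^{\circ}\subset(\mathbb{C}^{4})^{*}$, which has dimension $1$, is invariant under the contragredient representation $\rho^{\vee}(g)=\rho(g^{-1})^{T}$. Hence the statement will reduce to showing that $\rho^{\vee}$ has no non-trivial $1$-dimensional invariant subspace under exactly the same two conditions, and I can then essentially quote Proposition 3.2 rather than redo a fresh case analysis.

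First I would observe that since $\rho(\tau)$ is diagonal, $\rho^{\vee}(\tau)=\rho(\tau)^{-1}$ is diagonal with the same eigenspace decomposition $\langle e_{1},e_{4}\rangle\oplus\langle e_{2},e_{3}\rangle$. Thus the candidate $1$-dimensional $\rho^{\vee}(\tau)$-invariant subspaces are exactly the six families that appeared in the proof of Proposition 3.2: $\langle e_{i}\rangle$ for $i=1,2,3,4$, $\langle e_{1}+y e_{4}\rangle$ with $y\neq 0$, and $\langle e_{2}+x e_{3}\rangle$ with $x\neq 0$. Using the tensor factorization $\rho=\rho_{1}\otimes\rho_{2}$ one writes $\rho^{\vee}(\sigma_{1})=\rho_{1}(\sigma_{1})^{-T}\otimes\rho_{2}(\sigma_{1})^{-T}$. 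Next I would verify by a short calculation that, up to an overall scalar, conjugation of $\rho_{k}(\sigma_{1})^{-T}$ by $\operatorname{diag}(-1/c_{k},1)$ produces the matrix $\left(\begin{matrix} d_{k} & 1 \\ c_{k} & a_{k} \end{matrix}\right)$, which is again of the form $\left(\begin{matrix} a'_{k} & 1 \\ c'_{k} & d'_{k} \end{matrix}\right)$ with $(a'_{k},c'_{k},d'_{k})=(d_{k},c_{k},a_{k})$; since $c_{k}=a_{k}^{2}-a_{k}d_{k}+d_{k}^{2}$ is symmetric in $(a_{k},d_{k})$, this data defines another allowed representation of the form given in Proposition 2.1 with $T'_{k}=-1$.

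Consequently, after a suitable change of basis and rescaling, $\rho^{\vee}$ will itself be of the form $\rho(T'_{1},t'_{1},a'_{1},c'_{1},d'_{1})\otimes\rho(T'_{2},t'_{2},a'_{2},c'_{2},d'_{2})$ with $(a'_{k},d'_{k})=(d_{k},a_{k})$. Applying Proposition 3.2 to $\rho^{\vee}$, the absence of a $1$-dimensional invariant subspace will be equivalent to $a'_{1}a'_{2}\neq d'_{1}d'_{2}$ and $a'_{1}d'_{2}\neq a'_{2}d'_{1}$, which under the swap $(a'_{k},d'_{k})=(d_{k},a_{k})$ becomes $d_{1}d_{2}\neq a_{1}a_{2}$ and $d_{1}a_{2}\neq d_{2}a_{1}$, precisely the required conditions $a_{1}a_{2}\neq d_{1}d_{2}$ and $a_{1}d_{2}\neq a_{2}d_{1}$. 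The main obstacle I anticipate is the bookkeeping needed to verify that the contragredient really lands in the family of Proposition 2.1 after conjugation (matching the normalizations of the off-diagonal entries and checking that $T$ and the root-of-unity factor $t$ transform compatibly), so that Proposition 3.2 can be invoked directly rather than reproving the analogous claim by a fresh enumeration of three-dimensional $\tau$-stable subspaces.
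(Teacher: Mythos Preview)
Your duality approach is correct and genuinely different from the paper's proof. The paper proceeds by a direct enumeration: it lists the six families of $\rho(\tau)$-stable $3$-dimensional subspaces ($\langle e_1,e_4,e_3\rangle$, $\langle e_1,e_4,e_2\rangle$, $\langle e_1,e_2,e_3\rangle$, $\langle e_4,e_2,e_3\rangle$, $\langle e_1,e_4,e_2+xe_3\rangle$, $\langle e_1+ye_4,e_2,e_3\rangle$) and checks each against $\rho(\sigma_1)$, arriving at the two conditions via algebraic manipulations entirely parallel to those in Proposition~3.2. Your argument instead passes to the contragredient, observes that the factor $\rho_k(\sigma_1)^{-T}$ conjugates (by a diagonal matrix) to $\left(\begin{smallmatrix} d_k & 1\\ c_k & a_k\end{smallmatrix}\right)$ up to a scalar, and thereby identifies $\rho^{\vee}$ (up to a scalar twist on the $\sigma_j$'s, which is irrelevant for invariant subspaces) with the tensor product built from the swapped data $(a'_k,d'_k)=(d_k,a_k)$; since $c_k$ and the two inequalities $a_1a_2\neq d_1d_2$, $a_1d_2\neq a_2d_1$ are symmetric under this swap, Proposition~3.2 applied to $\rho^{\vee}$ yields the result immediately. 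Your route is more conceptual and eliminates the repeated case analysis; the paper's route is self-contained but essentially redoes the dimension-$1$ computation in dual form. The only point to make explicit, as you note, is that the global scalar $1/\bigl((a_1d_1-c_1)(a_2d_2-c_2)\bigr)$ on each $\rho^{\vee}(\sigma_j)$ does not affect invariant subspaces, so invoking Proposition~3.2 (which is stated for the unscaled representation) is legitimate.
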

\begin{proof}
The subspaces of dimension 3 that are possibly invariant are those spanned by the following subsets of vectors: $\{e_1,e_4,e_3\},~\{e_1,e_4,e_2\},~\{e_1,e_2,e_3\},~\{e_4,e_2,e_3\},~\{e_1,e_4,e_2+xe_3\},~\{e_1+ye_4,e_2,e_3\}$, where $x,y\in\mathbb{C}^*$.\\
$\rho(\sigma_1)(e_3)=\left(\begin{matrix}
a_2\\
c_2\\
d_1a_2\\
d_1c_2
\end{matrix}\right)\not\in\langle e_1,e_4,e_3\rangle$ since $c_2\neq0$. Thus, $\langle e_1,e_4,e_3\rangle$ is not invariant.\\
$\rho(\sigma_1)(e_2)=\left(\begin{matrix}
a_1\\
a_1d_2\\
c_1\\
c_1d_2
\end{matrix}\right)\not\in\langle e_1,e_4,e_2\rangle$ since $c_1\neq0$. So, $\langle e_1,e_4,e_2\rangle$ is not invariant \\
$\rho(\sigma_1)(e_1)=\left(\begin{matrix}
a_1a_2\\
a_1c_2\\
c_1a_2\\
c_1c_2
\end{matrix}\right)\not\in\langle e_1,e_2,e_3\rangle$ since  $c_1c_2\neq0$.
So, $\langle e_1,e_2,e_3\rangle$ is not invariant.\\
$\rho(\sigma_1)(e_4)=\left(\begin{matrix}
1\\
d_2\\
d_1\\
d_1d_2
\end{matrix}\right)\not\in\langle e_4,e_2,e_3\rangle$.
So, $\langle e_4,e_2,e_3\rangle$ is not invariant.\\\\
Now, consider the subspace $\langle e_1,e_4,e_2+xe_3\rangle$ with $x\neq0$.\\
Note that $\rho(\sigma_j)(e_1)=\left(\begin{matrix}
a_1a_2\\
\pm a_1c_2\\
\pm c_1a_2\\
c_1c_2
\end{matrix}\right)$,~$\rho(\sigma_j)(e_4)=\left(\begin{matrix}
1\\
\pm d_2\\
\pm d_1\\
d_1d_2
\end{matrix}\right)$ and $\rho(\sigma_j)(e_2+xe_3)=\left(\begin{matrix}
\pm a_1\pm a_2x\\
a_1d_2+c_2x\\
c_1+d_1a_2x\\
\pm c_1d_2\pm d_1c_2x
\end{matrix}\right)$.\\\\
Assume that the subspace $\langle e_1,e_4,e_2+xe_3\rangle$ is invariant. It follows that
$\{\rho(\sigma_1)(e_1),\rho(\sigma_1)(e_4), \rho(\sigma_1)(e_2+xe_3)\}\subset\langle e_1,e_4,e_2+xe_3\rangle$. This is equivalent to the following system.
\begin{subequations}
\begin{align}
&c_1a_2=a_1c_2x \\
&d_1=d_2x \\
&c_2x^2+(a_1d_2-d_1a_2)x-c_1=0
\end{align}
\end{subequations}
We determine conditions for which the system above is consistent. \\
From equations (3.2a) and (3.2b), we get
$x=\dfrac{c_1a_2}{a_1c_2}=\dfrac{d_1}{d_2}$ then substitute it into the equation (3.2c) to obtain $c_2\dfrac{d_1^2}{d_2^2}+a_1d_1-\dfrac{d_1^2a_2}{d_2}-c_1=0.$\\\\
$\Longrightarrow a_2d_2c_1=a_1d_1c_2\text{ and }c_2d_1^2-a_2d_2d_1^2=d_2^2c_1-d_2^2a_1d_1$\\\\
$\Longrightarrow  a_2d_2(a_1^2-a_1d_1+d_1^2)=a_1d_1(a_2^2-a_2d_2+d_2^2) ~ \text{ and } ~ d_1^2(c_2-a_2d_2)=d_2^2(c_1-a_1d_1)$\\\\
$\Longrightarrow  a_2d_2(a_1^2-2a_1d_1+d_1^2)=a_1d_1(a_2^2-2a_2d_2+d_2^2) ~ \text{ and } ~ d_1^2(a_2-d_2)^2=d_2^2(a_1-d_1)^2$\\\\
$\Longrightarrow  \dfrac{a_2d_2}{a_1d_1}=\dfrac{(a_2-d_2)^2}{(a_1-d_1)^2}=\dfrac{d_2^2}{d_1^2}$\\\\
$\Longrightarrow \dfrac{a_2}{a_1}=\dfrac{d_2}{d_1}$\\\\
$\Longrightarrow a_2d_1=a_1d_2$.\\\\
This gives a contradiction.\\
Now, consider the subspace $\langle e_1+ye_4,e_2,e_3\rangle$ with $y\neq0$. Note that \\
$\rho(\sigma_j)(e_2)=\left(\begin{matrix}
\pm a_1\\
a_1d_2\\
c_1\\
\pm c_1d_2
\end{matrix}\right)$,~
$\rho(\sigma_j)(e_3)=\left(\begin{matrix}
\pm a_2\\
c_2\\
d_1a_2\\
\pm d_1c_2
\end{matrix}\right)$~ and ~$\rho(\sigma_j)(e_1+ye_4)=\left(\begin{matrix}
a_1a_2+y\\
\pm a_1c_2\pm d_2y\\
\pm c_1a_2\pm d_1y\\
c_1c_2+d_1d_2y
\end{matrix}\right)$.\\\\
Assume that the subspace $\langle e_1+ye_4,e_2,e_3\rangle$ is invariant. It follows that the vectors $\rho(\sigma_i)(e_2),~~\rho(\sigma_i)(e_3)$ and $\rho(\sigma_i)(e_1+ye_4)$ are linear combinations of $e_1+ye_4,~e_2,$ and $e_3$. \\
$\Longrightarrow\dfrac{c_1d_2}{a_1}=\dfrac{d_1c_2}{a_2}=y$ and $y^2+(a_1a_2-d_1d_2)y-c_1c_2=0$\\
\\$\Longrightarrow a_2d_2c_1=a_1d_1c_2 ~ \text{ and } ~ \dfrac{c_1^2d_2^2}{a_1^2}+a_2d_2c_1-\dfrac{c_1d_1d_2^2}{a_1}-c_1c_2=0$\\\\
$\Longrightarrow a_2d_2(a_1^2-a_1d_1+d_1^2)=a_1d_1(a_2^2-a_2d_2+d_2^2) ~ \text{ and } ~ c_1d_2^2+a_2d_2a^2_1-d_1d_2^2a_1-c_2a_1^2=0$\\\\
$\Longrightarrow a_2d_2(a_1^2-2a_1d_1+d_1^2)=a_1d_1(a_2^2-2a_2d_2+d_2^2) ~ \text{ and } ~ d_2^2(c_1-d_1a_1)=a_1^2(c_2-a_2d_2)$\\\\
$\Longrightarrow a_2d_2(a_1-d_1)^2=a_1d_1(a_2-d_2)^2 ~ \text{ and } ~ d_2^2(a_1-d_1)^2=a_1^2(a_2-d_2)^2$\\\\
$\Longrightarrow\dfrac{a_2d_2}{a_1d_1}=\dfrac{d_2^2}{a_1^2}=\dfrac{(a_2-d_2)^2}{(a_1-d_1)^2}$\\
$\Longrightarrow \dfrac{a_2}{d_1}=\dfrac{d_2}{a_1}$.\\\\
This gives a contradiction. 
\end{proof}
We thus get our main theorem.
\begin{thm}
The representation $\rho:\mathcal{NB}_4\to GL(4,\mathbb{C})$ is irreducible if and only if $a_1a_2\neq d_1d_2$ and $a_1d_2\neq a_2d_1$.
\end{thm}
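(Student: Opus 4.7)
The plan is to observe that this theorem is essentially an immediate corollary of Propositions 3.1, 3.2, and 3.3, which together handle every possible dimension of a non-trivial proper invariant subspace.

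First I would recall the definition: a representation $\rho$ on a $4$-dimensional space $V$ is irreducible if and only if $V$ has no invariant subspace $W$ with $0 < \dim W < 4$. Hence the only candidate invariant subspaces to rule out are those of dimensions $1$, $2$, and $3$. The three preceding propositions have each established that, under the running hypothesis $a_1a_2d_1d_2 \neq 0$, the nonexistence of invariant subspaces of the respective dimension is equivalent to the same pair of inequalities $a_1a_2 \neq d_1d_2$ and $a_1d_2 \neq a_2d_1$.

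For the forward direction, I would assume $\rho$ is irreducible and simply invoke any one of the three propositions (say Proposition 3.1) to extract the two required inequalities. For the reverse direction, I would assume both $a_1a_2 \neq d_1d_2$ and $a_1d_2 \neq a_2d_1$, and then apply Propositions 3.1, 3.2, and 3.3 in turn to conclude that there are no invariant subspaces of dimension $1$, $2$, or $3$; therefore $\rho$ is irreducible.

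There is no real technical obstacle here since the heavy lifting has already been done in the three propositions. The only point requiring a brief justification is that every possibly invariant subspace must have one of the listed bases. This is ensured by first diagonalizing $\rho(\tau)$, whose eigenspaces are the spans of $\{e_1, e_4\}$ and $\{e_2, e_3\}$ (each of dimension $2$, with distinct eigenvalues $\pm t_1 t_2$); any $\rho(\tau)$-invariant subspace is a direct sum of intersections with these eigenspaces, which gives precisely the families enumerated in the proofs of the three propositions. Thus the proof reduces to citing the three propositions and combining the conclusions.
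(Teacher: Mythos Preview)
Your proposal is correct and matches the paper's approach exactly: the paper presents Theorem~3.4 with no separate proof, simply writing ``We thus get our main theorem'' after Propositions~3.1, 3.2, and 3.3, so the result is intended as the immediate conjunction of those three propositions. Your write-up in fact supplies slightly more detail than the paper (the explicit forward/reverse split and the remark on why $\rho(\tau)$-invariance forces the enumerated forms), but the substance is identical.
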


\subsubsection{Case $T_1=T_2=\pm i$}
In the case $T_1=T_2=i$, the representation $\rho$ of $\mathcal{NB}_4$ is given by
$$\rho(\tau)=
t_1t_2\left(\begin{matrix}
-1&0&0&0\\
0&i&0&0\\
0&0&i&0\\
0&0&0&1
\end{matrix}\right),
~
\rho(\sigma_1)=\left(\begin{matrix}
d_1d_2&d_1&d_2&1\\
-d_1d^2_2&d_1d_2&-d^2_2&d_2\\
-d^2_1d_2&-d_1^2&d_1d_2&d_1\\
d^2_1d^2_2&-d^2_1d_2&-d_1d^2_2&d_1d_2
\end{matrix}\right),$$
$$\rho(\sigma_2)=\left(\begin{matrix}
d_1d_2& id_1& id_2&-1\\
 id_1d^2_2&d_1d_2&-d^2_2& id_2\\
 id^2_1d_2&-d_1^2&d_1d_2& id_1\\
-d^2_1d^2_2& id^2_1d_2& id_1d^2_2&d_1d_2
\end{matrix}\right),
~
\rho(\sigma_3)=\left(\begin{matrix}
d_1d_2&-d_1&-d_2&1\\
d_1d^2_2&d_1d_2&-d^2_2&-d_2\\
d^2_1d_2&-d_1^2&d_1d_2&-d_1\\
d^2_1d^2_2&d^2_1d_2&d_1d^2_2&d_1d_2
\end{matrix}\right),$$
$$\rho(\sigma_4)=\left(\begin{matrix}
d_1d_2&- id_1&- id_2&-1\\
 id_1d^2_2&d_1d_2&-d^2_2&- id_2\\
 id^2_1d_2&-d_1^2&d_1d_2&- id_1\\
-d^2_1d^2_2&- id^2_1d_2&- id_1d^2_2&d_1d_2
\end{matrix}\right).$$
Likewise, we write the matrices for $T_1=T_2=-i.$
\begin{prop}
The representation $\rho$ is reducible.
\end{prop}
\begin{proof}
Let $S$ be the subspace spanned by the vector $v=d_2e_2-d_1e_3$. Note that $e_2$ and $e_3$ are eigen-vectors of $\rho(\tau )$ corresponding the same eigenvalue. So, $S$ is invariant under $\rho(\tau)$. It is easy to show that $\rho(\sigma_j)(v)=2d_1d_2v$ for $j=1,2,3,4$. This completes the proof.
\end{proof}
\subsubsection{Case $T_1=-T_2=\pm i$}
In the case  $T_1=-T_2=i$, the representation $\rho$ of $\mathcal{NB}_4$ is given by
 $$\rho(\tau)=
t_1t_2\left(\begin{matrix}
1&0&0&0\\
0& i&0&0\\
0&0&- i&0\\
0&0&0&1
\end{matrix}\right),
~
\rho(\sigma_1)=\left(\begin{matrix}
d_1d_2&d_1&d_2&1\\
-d_1d^2_2&d_1d_2&-d^2_2&d_2\\
-d^2_1d_2&-d_1^2&d_1d_2&d_1\\
d^2_1d^2_2&-d^2_1d_2&-d_1d^2_2&d_1d_2
\end{matrix}\right),$$
$$\rho(\sigma_2)=\left(\begin{matrix}
d_1d_2&- id_1& id_2&1\\
- id_1d^2_2&d_1d_2&d^2_2& id_2\\
 id^2_1d_2&d_1^2&d_1d_2&- id_1\\
d^2_1d^2_2& id^2_1d_2&- id_1d^2_2&d_1d_2
\end{matrix}\right),
~
\rho(\sigma_3)=\left(\begin{matrix}
d_1d_2&-d_1&-d_2&1\\
d_1d^2_2&d_1d_2&-d^2_2&-d_2\\
d^2_1d_2&-d_1^2&d_1d_2&-d_1\\
d^2_1d^2_2&d^2_1d_2&d_1d^2_2&d_1d_2
\end{matrix}\right),$$

$$\rho(\sigma_4)=\left(\begin{matrix}
d_1d_2& id_1&- id_2&1\\
 id_1d^2_2&d_1d_2&d^2_2&- id_2\\
- id^2_1d_2&d_1^2&d_1d_2& id_1\\
d^2_1d^2_2&- id^2_1d_2& id_1d^2_2&d_1d_2
\end{matrix}\right).$$
Likewise, we write the matrices for $T_1=-T_2=-i.$
\begin{prop}
The representation $\rho$ is reducible.
\end{prop}
\begin{proof}
Let $S$ be the subspace spanned by the vector $v=e_1+d_1d_2e_4$. The subspace $S$ is invariant under $\rho(\tau)$ because, $e_1$ and $e_4$ are eigenvectors of $\rho(\tau)$ corresponding to the same eigenvalue.
By direct calculations we get $\rho(\sigma_j)(v)=2d_1d_2v\in S$ for $j=1,2,3,4$. Hence $S$ is invariant under $\rho(\sigma_j)$.
\end{proof}

\subsubsection{Case $T_1=-1,T_2=\pm i$}
Without loss of generality, assume that $T_1=-1$ and $T_2=i$. Then we have $T_1T_2=-i$ and 
 $$\rho(\tau)=
t_1t_2\left(\begin{matrix}
- i&0&0&0\\
0&-1&0&0\\
0&0& i&0\\
0&0&0&1
\end{matrix}\right),
~
\rho(\sigma_1)=\left(\begin{matrix}
a_1d_2&a_1&d_2&1\\
-a_1d^2_2&a_1d_2&-d^2_2&d_2\\
c_1d_2&c_1&d_1d_2&d_1\\
-c_1d^2_2&c_1d_2&-d_1d^2_2&d_1d_2
\end{matrix}\right) $$
Likewise, we write the matrices for $T_1=-1, T_2=-i.$\\
Note that the vectors $e_j$ ($j=1,2,3,4$) are eigenvectors of $\rho(\tau)$ corresponding to four distinct eigenvalues (each of multiplicity 1).
\begin{prop}
The representation $\rho$ is irreducible.
\end{prop}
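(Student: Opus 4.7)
The plan exploits the remark immediately preceding the proposition: $\rho(\tau)$ has four distinct eigenvalues $-it_1t_2$, $-t_1t_2$, $it_1t_2$, $t_1t_2$, each of multiplicity one, with eigenvectors $e_1,e_2,e_3,e_4$ respectively. Therefore any subspace invariant under $\rho(\tau)$ must be a direct sum of $\rho(\tau)$-eigenspaces, i.e.\ the span of some subset of the standard basis. So to prove irreducibility it suffices to show that no proper nonzero coordinate subspace $\langle e_{i_1},\dots,e_{i_k}\rangle$ with $\{i_1,\dots,i_k\}\subsetneq\{1,2,3,4\}$ is preserved by $\rho(\sigma_1)$.

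The next step is to verify that every entry of the matrix $\rho(\sigma_1)$ displayed above is nonzero. By the hypothesis $a_1a_2d_1d_2\neq 0$ together with $T_2=i$ we have $a_2=d_2$ and $c_2=-d_2^2\neq 0$ from Proposition~2.1. Since $T_1=-1$ forces $c_1=a_1^2-a_1d_1+d_1^2$ with $a_1\neq d_1$, and since a direct computation shows $a_1^2-a_1d_1+d_1^2$ vanishes only when $d_1/a_1$ is a primitive sixth root of unity (which still gives $c_1\neq 0$ when $a_1\neq 0$, and is excluded by the irreducibility conditions of Proposition~2.1 anyway since $c_1\neq 0$ is required there), one checks that all sixteen entries of $\rho(\sigma_1)$ are nonzero.

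From this it follows immediately that for every $j\in\{1,2,3,4\}$ the column $\rho(\sigma_1)(e_j)$ has all four coordinates nonzero. Hence $\rho(\sigma_1)(e_j)$ does not belong to $\langle e_{i_1},\dots,e_{i_k}\rangle$ whenever $j\in\{i_1,\dots,i_k\}$ and $\{i_1,\dots,i_k\}$ is a proper subset of $\{1,2,3,4\}$, because at least one coordinate outside the subset is nonzero. Consequently no proper nonzero coordinate subspace is $\rho(\sigma_1)$-invariant, and the proof is complete.

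The only subtlety — and the one place where I expect to slow down — is the bookkeeping to confirm that no entry of $\rho(\sigma_1)$ vanishes under the standing hypotheses; in particular one must recall that the constants $a_1,d_1,c_1,d_2$ (and hence their products appearing as entries) are all nonzero by combining the blanket assumption $a_1a_2d_1d_2\neq 0$ with the constraints $c_1\neq 0$, $a_2=d_2$, $c_2=-d_2^2$ of Proposition~2.1. Once this nonvanishing is established, the argument is essentially immediate from the eigenstructure of $\rho(\tau)$.
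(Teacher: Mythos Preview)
Your proof is correct and follows essentially the same approach as the paper: both use that $\rho(\tau)$ has four simple eigenvalues (so invariant subspaces are coordinate subspaces) and then check via the nonvanishing of the entries of $\rho(\sigma_1)$ that no proper coordinate subspace is preserved. Your argument is in fact cleaner, handling all dimensions at once rather than the paper's ``likewise'' for dimensions $2$ and $3$; the one muddled phrase is the parenthetical ``which still gives $c_1\neq 0$ when $a_1\neq 0$'', but since $c_1\neq 0$ is already a hypothesis from Proposition~2.1 this does not affect the validity.
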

\begin{proof}
We have 
$\rho(\sigma_1)(e_1)=\left(\begin{matrix}
a_1d_2\\
 -a_1d_2^2\\ 
 c_1d_2\\ 
 -c_1d_2^2
\end{matrix}\right)$,
$\rho(\sigma_1)(e_2)=\left(\begin{matrix}
a_1\\
 a_1d_2\\ 
 c_1\\ 
 c_1d_2
\end{matrix}\right)$,
$\rho(\sigma_1)(e_3)=\left(\begin{matrix}
d_2\\
-d_2^2\\ 
 d_1d_2\\ 
 -d_1d_2^2
\end{matrix}\right)$ and 
$\rho(\sigma_1)(e_4)=\left(\begin{matrix}
1\\
d_2\\ 
d_1\\ 
d_1d_2
\end{matrix}\right)$. 
Note that $a_1\neq0$, $d_1\neq0$, $c_1\neq0$ and $d_2\neq0$. This implies that none of the proper subspaces spanned by the vectors $e_j$ are invariant under $\rho(\sigma_1)$. Likewise, we show that there are no invariant subspaces of dimensions 2 and 3. Hence $\rho$ is irreducible. 
\end{proof}
\medskip
\subsection{Representation of $\mathcal{NB}_3$}

Consider two irreducible representations of dimesion 2 of the necklace braid group $\mathcal{NB}_3$ $\rho_1=\rho(T_1,t_1,\omega_1,c_1,d_1)$ and $\rho_2=\rho(T_2,t_2,\omega_2,c_2,d_2)$ (see Proposition 2.1).\\\\
Recall that $T_1=e^{\pm i2\pi/3},T_2=e^{\pm i2\pi/3},\omega_1=e^{\pm i\pi/3}$ and $\omega_2=e^{\pm i\pi/3}$.\\\\
Let $\rho$ be the representation of $\mathcal{NB}_3$, which is defined in Definition 3.1. We have
$$
\rho(\tau)=
t_1t_2\left(\begin{matrix}
T_1T_2&0&0&0\\
0&T_1&0&0\\
0&0&T_2&0\\
0&0&0&1
\end{matrix}\right)
$$
and 
$$\rho(\sigma_j)=
\left(\begin{matrix}
\omega_1\omega_2d_1d_2&\omega_1d_1T_2^{j-1}&\omega_2d_2T_1^{j-1}&(T_1T_2)^{j-1}\\
\omega_1d_1c_2T_2^{1-j}&\omega_1d_1d_2&c_2T_1^{j-1}T_2^{1-j}&d_2T_1^{j-1}\\
\omega_2c_1d_2T_1^{1-j}&c_1T_1^{1-j}T_2^{j-1}&\omega_2d_2d_1&d_1T_2^{j-1}\\
c_1c_2(T_1T_2)^{1-j}&c_1d_2T_1^{1-j}&d_1c_2T_2^{1-j}&d_1d_2
\end{matrix}\right).$$
 for $j=1,2,3$. We have two cases (1) $T_1=e^{2\pi i/3},T_2=e^{-2\pi i/3}$ and (2) $T_1=T_2=e^{\pm 2\pi i/3}$.\\

We determine a necessary and sufficient condition under which $\rho$ is an irreducible representation of $\mathcal{NB}_3$ of dimension 4.\\

In what follows, suppose that $d_1d_2\neq0$.
\subsubsection{Case $T_1=e^{2\pi i/3}, T_2=e^{-2\pi i/3}$}
In this case, we have  $$\rho(\tau)=
t_1t_2\left(\begin{matrix}
1&0&0&0\\
0&e^{2\pi i/3}&0&0\\
0&0&e^{-2\pi  i/3}&0\\
0&0&0&1
\end{matrix}\right)
.$$ Then the eigenvalues of $\rho(\tau)$ are $\lambda_1=t_1t_2$ (of multiplicity 2) and $\lambda_2=e^{2\pi i/3}t_1t_2$  and $\lambda_3=e^{-2\pi i/3}t_1t_2$. The corresponding eigenvectors are $x_1e_1+x_4e_4$ for $\lambda_1$ and $e_2$ and $e_3$ for $\lambda_2$ and $\lambda_3$ respectively, where $\{x_1,x_4\}\subset\mathbb{C}^*$.\\
\begin{lem}
If $T_1=e^{2\pi i/3}$, $T_2=e^{-2\pi i/3},\omega_1=e^{\pm i\pi/3}$ and $\omega_2=e^{\pm i\pi/3}$ then the representation $\rho$ of $\mathcal{NB}_3$ has no invariant proper subspaces of dimension 1 if and only if  $$\omega_1\omega_2\neq1 \text{ or }\omega_1d_1^2c_2\neq\omega_2d_2^2c_1.$$
\end{lem}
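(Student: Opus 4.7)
The approach is to enumerate the one-dimensional subspaces that are invariant under $\rho(\tau)$ and then test each against $\rho(\sigma_1)$ (since the $\sigma_i$ are all $\tau$-conjugate by relation (N1), $\rho(\sigma_1)$-invariance together with $\rho(\tau)$-invariance already yields $\rho$-invariance). From the eigendata recorded just before the lemma--$e_2$ and $e_3$ each span their own $1$-dimensional eigenspace (for $\lambda_2$ and $\lambda_3$), while $\{e_1,e_4\}$ span a $2$-dimensional eigenspace for $\lambda_1=t_1t_2$--the only candidate $\rho(\tau)$-invariant lines are $\langle e_2\rangle$, $\langle e_3\rangle$, and the pencil $\langle x_1 e_1+x_4 e_4\rangle$ with $(x_1,x_4)\neq(0,0)$.

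First I would dispose of $\langle e_2\rangle$ and $\langle e_3\rangle$ by reading off the second and third columns of $\rho(\sigma_1)$. At $j=1$ the second column equals $(\omega_1 d_1,\,\omega_1 d_1 d_2,\,c_1,\,c_1 d_2)^{T}$, whose third entry is nonzero since $c_1\neq 0$, and the third column equals $(\omega_2 d_2,\,c_2,\,\omega_2 d_1 d_2,\,d_1 c_2)^{T}$, whose second entry is nonzero since $c_2\neq 0$. So neither of these lines is $\rho(\sigma_1)$-invariant.

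The substantive case is $v=x_1 e_1+x_4 e_4$. Using $T_1T_2=1$, the first and fourth columns of $\rho(\sigma_j)$ simplify, and requiring $\rho(\sigma_j)v=\lambda v$ produces four scalar relations: two asserting that the middle entries vanish and two determining $\lambda$ from the outer entries. The vanishing of the middle entries for $j\in\{1,2,3\}$ reduces, after pulling out the common nonzero factors $T_2^{1-j}$ and $T_1^{1-j}$ respectively, to the $j$-independent pair
\begin{align*}
\omega_1 d_1 c_2\,x_1 + d_2\,x_4 &= 0,\\
\omega_2 c_1 d_2\,x_1 + d_1\,x_4 &= 0,
\end{align*}
whose $2\times 2$ determinant is $\omega_1 d_1^{2}c_2-\omega_2 d_2^{2}c_1$. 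A nonzero solution $(x_1,x_4)$ therefore forces $\omega_1 d_1^{2}c_2=\omega_2 d_2^{2}c_1$, in which case $(x_1,x_4)$ is proportional to $(d_2,-\omega_1 d_1 c_2)$.

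It then remains to check consistency with the first and fourth rows, which give two expressions for the same $\lambda$. Substituting $(x_1,x_4)=(d_2,-\omega_1 d_1 c_2)$, equating the two values of $\lambda$, clearing denominators, and using $\omega_1 d_1^{2}c_2=\omega_2 d_2^{2}c_1$ to rewrite $\omega_1^{2}d_1^{2}c_2$ as $\omega_1\omega_2 d_2^{2}c_1$, the identity collapses (after factoring $d_2^{2}$) to
\[
(\omega_1\omega_2-1)\,(c_1-\omega_1 d_1^{2})=0.
\]
Proposition~2.1 forces $c_1\neq\omega_1 d_1^{2}$, so an invariant line of this form exists if and only if $\omega_1\omega_2=1$ \emph{together with} $\omega_1 d_1^{2}c_2=\omega_2 d_2^{2}c_1$. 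Negating yields precisely the stated condition. The main obstacle is the bookkeeping in the final substitution and factoring step; the rest is routine once the reduction $T_1T_2=1$ is exploited.
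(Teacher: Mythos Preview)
Your proof is correct and follows essentially the same route as the paper: enumerate the $\rho(\tau)$-eigenlines, discard $\langle e_2\rangle$ and $\langle e_3\rangle$ by inspecting columns of $\rho(\sigma_1)$, and then analyse the pencil in $\langle e_1,e_4\rangle$. The only cosmetic differences are that you keep homogeneous coordinates $(x_1,x_4)$ where the paper normalises to $(1,x)$, and your final factorisation comes out as $(\omega_1\omega_2-1)(c_1-\omega_1 d_1^{2})=0$ whereas the paper, substituting the other expression for $x$, obtains the symmetric twin $(\omega_1\omega_2-1)(c_2-\omega_2 d_2^{2})=0$; both vanish only when $\omega_1\omega_2=1$ by Proposition~2.1.
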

\begin{proof}
By direct calculations, we check that each of the subspaces spanned by the vectors $e_j$ ($j=1,2,3$) are not invariant under $\rho(\sigma_1)$.\\
It remains to consider the subspace $S$ spanned by the vector $v=e_1+xe_4$ with $x\in\mathbb{C}^*$.
Since $T_1T_2=1$, it follows that $$\rho(\sigma_j)(v)=\left(\begin{matrix}
\omega_1\omega_2d_1d_2+x\\
\omega_1d_1c_2T_2^{1-j}+d_2T_1^{j-1}x\\
\omega_2c_1d_2T_1^{1-j}+d_1T_2^{j-1}x\\
c_1c_2+d_1d_2x
\end{matrix}\right)=
\left(\begin{matrix}
\omega_1\omega_2d_1d_2+x\\
(\omega_1d_1c_2+d_2x)T_1^{j-1}\\
(\omega_2c_1d_2+d_1x)T_2^{j-1}\\
c_1c_2+d_1d_2x
\end{matrix}\right).$$ 
So
\begin{align*}
\rho(\sigma_j)(v)\in S & \Leftrightarrow  \rho(\sigma_j)(v)=\lambda v \text{ for some }\lambda\in\mathbb{C} \\ &
\Leftrightarrow x=\frac{-\omega_1d_1c_2}{d_2}=\frac{-\omega_2c_1d_2}{d_1} \text{ and }  x^2+(\omega_1\omega_2-1)d_1d_2x-c_1c_2=0\\ &\Leftrightarrow \omega_1d_1^2c_2=\omega_2d_2^2c_1\text{ and }
(\omega_1\omega_2-1)(c_2-\omega_2d_2^2)=0\\ & \Leftrightarrow
\omega_1d_1^2c_2=\omega_2d_2^2c_1\text{ and }
\omega_1\omega_2-1=0.
%\text{ because, } c_2\neq\omega_2d_2^2
\end{align*}
Therefore the subspace $S$, that is spanned by $v=e_1-\frac{\omega_1d_1c_2}{d_2}e_4$, is  invariant under $\rho(\sigma_j)$ if $\omega_1\omega_2=1$ and $\omega_1d_1^2c_2= \omega_2d_2^2c_1$. This gives a contradiction.
\end{proof}
\begin{lem}
If $T_1=e^{2\pi i/3}$, $T_2=e^{-2\pi i/3},\omega_1=e^{\pm i\pi/3}$ and $\omega_2=e^{\pm i\pi/3}$ then the representation $\rho$ of $\mathcal{NB}_3$ has no invariant proper subspaces of dimension 2.
\end{lem}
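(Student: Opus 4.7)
My approach mirrors the strategy used in the proof of Lemma 3.5 for dimension $1$: exploit the eigenspace decomposition of $\rho(\tau)$ to enumerate all candidate $\rho(\tau)$-invariant subspaces of dimension $2$, then check each for invariance under $\rho(\sigma_1)$.

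Since $\rho(\tau)$ is diagonal in the standard basis with eigenvalues $\lambda_1=t_1t_2$ of multiplicity $2$ (eigenspace $V_1=\langle e_1,e_4\rangle$), $\lambda_2=e^{2\pi i/3}t_1t_2$ (eigenspace $V_2=\langle e_2\rangle$), and $\lambda_3=e^{-2\pi i/3}t_1t_2$ (eigenspace $V_3=\langle e_3\rangle$), any $\rho(\tau)$-invariant subspace $W$ splits as $W=(W\cap V_1)\oplus(W\cap V_2)\oplus(W\cap V_3)$. Consequently, every $2$-dimensional $\rho(\tau)$-invariant subspace has one of four types: (a) $W=\langle e_1,e_4\rangle$; (b) $W=\langle e_2,e_3\rangle$; (c) $W=L\oplus\langle e_2\rangle$; or (d) $W=L\oplus\langle e_3\rangle$, where in the last two cases $L$ is an arbitrary line in $\langle e_1,e_4\rangle$. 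Moreover, because (N1) gives $\rho(\sigma_j)=\rho(\tau)^{j-1}\rho(\sigma_1)\rho(\tau)^{1-j}$, it suffices to rule out $\rho(\sigma_1)$-invariance for each of these four types.

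For each type I read off the relevant column of $\rho(\sigma_1)$ and identify a coordinate that is forced to be zero but is not. In case (a), $\rho(\sigma_1)(e_1)=(\omega_1\omega_2 d_1d_2,\,\omega_1 d_1c_2,\,\omega_2 c_1d_2,\,c_1c_2)^T$ has nonzero $e_2$- and $e_3$-components because $c_1,c_2,d_1,d_2\neq 0$. In case (b), $\rho(\sigma_1)(e_2)=(\omega_1 d_1,\,\omega_1 d_1d_2,\,c_1,\,c_1d_2)^T$ has a nonzero $e_1$-component. In case (c), $\rho(\sigma_1)$-invariance would require the $e_3$-coordinate of $\rho(\sigma_1)(e_2)$ to vanish; but that coordinate is $c_1\neq 0$, a contradiction independent of the choice of $L\subset\langle e_1,e_4\rangle$. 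Symmetrically, case (d) forces $c_2=0$ via the $e_2$-coordinate of $\rho(\sigma_1)(e_3)$, again a contradiction.

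I do not expect any serious obstacle: the argument is purely linear-algebraic and the enumeration is clean because semisimplicity of $\rho(\tau)$ lets me decompose any invariant $W$ with respect to the eigenspaces. The only point requiring a word of care is that in cases (c) and (d) the line $L$ is allowed to be $\langle e_1\rangle$ or $\langle e_4\rangle$, but the coordinate argument above closes these subcases uniformly since the obstruction lives in a coordinate outside $V_1\oplus V_2$ (respectively $V_1\oplus V_3$).
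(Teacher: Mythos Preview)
Your proof is correct and follows essentially the same approach as the paper: enumerate the $\rho(\tau)$-invariant $2$-dimensional subspaces and rule each one out by exhibiting a nonzero forbidden coordinate in some $\rho(\sigma_1)(e_k)$, using $c_1,c_2,d_1,d_2\neq 0$. Your derivation of the candidate list via the eigenspace splitting $W=(W\cap V_1)\oplus(W\cap V_2)\oplus(W\cap V_3)$ is a cleaner justification than the paper's bare list, and your uniform treatment of cases (c) and (d) (including the degenerate lines $L=\langle e_1\rangle,\langle e_4\rangle$) is tidy; the invocation of (N1) is harmless but not actually needed, since failing $\rho(\sigma_1)$-invariance alone already disqualifies $W$.
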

\begin{proof}
The possible  two dimensional invariant subspaces candidates to study are the following:
\begin{itemize}
\item[1.] $\langle e_i,e_j\rangle$ for $i\neq j$
\item[2.] $\langle e_2,e_1+xe_4\rangle$ for $x\neq0$
\item[3.] $\langle e_3,e_1+ye_4\rangle$ for $y\neq0$
\end{itemize} 
Since $c_1\neq0, c_2\neq0$, $d_1\neq0$ and $d_2\neq0$, it follows that none of the subspaces mentioned above is invariant under $\rho(\sigma_1)$.\\
\end{proof}
\begin{lem}
If $T_1=e^{2\pi i/3}$, $T_2=e^{-2\pi i/3}, \omega_1=e^{\pm i\pi/3}$ and $\omega_2=e^{\pm i\pi/3}$ then the representation $\rho$ of $\mathcal{NB}_3$ has no invariant proper subspaces of dimension 3 if and only if  $$\omega_1\omega_2\neq1 \text{ or }\omega_1d_1^2c_2\neq\omega_2d_2^2c_1.$$
\end{lem}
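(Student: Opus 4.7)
Since $\rho(\tau)$ has eigenvalues $t_1t_2$ of multiplicity $2$ (with eigenspace $\langle e_1,e_4\rangle$), $e^{2\pi i/3}t_1t_2$ (with eigenvector $e_2$), and $e^{-2\pi i/3}t_1t_2$ (with eigenvector $e_3$), every $3$-dimensional $\rho(\tau)$-invariant subspace falls into one of three types: (a) $\langle e_1,e_4,e_2\rangle$ or $\langle e_1,e_4,e_3\rangle$; (b) $\langle e_2,e_3,e_1\rangle$ or $\langle e_2,e_3,e_4\rangle$; or (c) $\langle e_2,e_3,e_1+xe_4\rangle$ with $x\in\mathbb{C}^*$. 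The plan is to rule out (a) and (b) unconditionally and to show that a subspace in (c) is $\rho$-invariant precisely when $\omega_1\omega_2=1$ and $\omega_1 d_1^2c_2=\omega_2d_2^2c_1$ both hold; this yields exactly the stated equivalence.

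For (a) and (b), I would apply $\rho(\sigma_1)$ to the last basis vector of each subspace (for instance $\rho(\sigma_1)(e_2)$ for $\langle e_1,e_4,e_2\rangle$ and $\rho(\sigma_1)(e_1)$ for $\langle e_2,e_3,e_1\rangle$), using the matrix for $\rho(\sigma_j)$ with $j=1$, and exhibit a coordinate outside the subspace whose value is a nonzero product of $c_1,c_2,d_1,d_2$, all of which are nonzero by Proposition~2.1. These are short routine checks essentially identical to those in Lemmas~3.5 and~3.6.

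The substantive case is (c). Using $T_1T_2=1$ to simplify exponents, the invariance of the subspace $\langle e_2,e_3,e_1+xe_4\rangle$ under $\rho(\sigma_j)$ splits into three constraints. The requirement that the first and fourth coordinates of $\rho(\sigma_j)(e_2)$ be in the ratio $1:x$ gives $x=c_1d_2/(\omega_1d_1)$; the same requirement on $\rho(\sigma_j)(e_3)$ gives $x=c_2d_1/(\omega_2d_2)$; and the same requirement on $\rho(\sigma_j)(e_1+xe_4)$ gives the quadratic $x^2+(\omega_1\omega_2-1)d_1d_2x-c_1c_2=0$. Equating the two formulas for $x$ yields the relation $\omega_1d_1^2c_2=\omega_2d_2^2c_1$.

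To close the argument, I would substitute $x=c_1d_2/(\omega_1d_1)$ into the quadratic and use the relation $\omega_1d_1^2c_2=\omega_2d_2^2c_1$ to rewrite $c_1c_2$ as $\omega_1\omega_2 x^2$; the quadratic then collapses to $(\omega_1\omega_2-1)\,x\,(x-d_1d_2)=0$. Since $x\neq 0$, and since the constraint $c_1\neq\omega_1d_1^2$ from Proposition~2.1 excludes $x=d_1d_2$ (which would force $c_1=\omega_1d_1^2$), the only surviving possibility is $\omega_1\omega_2=1$. Thus a proper $3$-dimensional invariant subspace exists if and only if both $\omega_1\omega_2=1$ and $\omega_1d_1^2c_2=\omega_2d_2^2c_1$, which is the negation of the lemma's hypothesis. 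The main obstacle is the algebraic observation that the two linear conditions force the quadratic into the factorable form above, with the spurious root $x=d_1d_2$ ruled out precisely by the invertibility constraint built into Proposition~2.1.
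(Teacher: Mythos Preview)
Your proposal is correct and follows essentially the same approach as the paper: the same classification of $\rho(\tau)$-invariant $3$-dimensional subspaces, the same three constraints $x=c_1d_2/(\omega_1d_1)$, $x=c_2d_1/(\omega_2d_2)$, and $x^2+(\omega_1\omega_2-1)d_1d_2x-c_1c_2=0$, and the same conclusion. The only cosmetic difference is in the endgame algebra: the paper reduces to the factorization $(\omega_1\omega_2-1)(\omega_2d_2^2-c_2)=0$ and invokes $c_2\neq\omega_2d_2^2$, whereas you reduce to $(\omega_1\omega_2-1)\,x\,(x-d_1d_2)=0$ and invoke $c_1\neq\omega_1d_1^2$; under the relation $\omega_1d_1^2c_2=\omega_2d_2^2c_1$ these two exclusions are equivalent, so the arguments coincide.
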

\begin{proof}
The possible invariant three dimensional invariant subspaces to study are:
 \begin{itemize}
\item[1.] $\langle e_i,e_j,e_k\rangle$ where $i, j, k$ are pairwise distinct
\item[2.] $\langle e_2,e_3,e_1+xe_4\rangle$ for $x\neq0$
\end{itemize}
By direct computations, it is easy to check that all the subspaces in case 1 are not invariant.\\
In case 2, we let $S$ be a subspace of the form $\langle e_2,e_3,e_1+xe_4\rangle$ with $x\neq0$.\\
$$\rho(\sigma_j)(e_2)=\left(\begin{matrix}
\omega_1d_1T_2^{j-1}\\
 \omega_1d_1d_2\\ 
 c_1T_1^{1-j}T_2^{j-1}\\ 
 c_1d_2T_1^{1-j}
\end{matrix}\right),
~\rho(\sigma_j)(e_3)=\left(\begin{matrix}
\omega_2d_2T_1^{j-1}\\
 c_2T_1^{j-1}T_2^{1-j}\\ 
 \omega_2d_1d_2\\ 
 d_1c_2T_2^{1-j}
\end{matrix}\right)$$
and $$\rho(\sigma_j)(e_1+xe_4)=\left(\begin{matrix}
\omega_1\omega_2d_1d_2+x\\
 \omega_1d_1c_2T_2^{1-j}+d_2T_1^{j-1}x\\ 
 \omega_2c_1d_2T_1^{1-j}+d_1T_2^{j-1}x\\ 
 c_1c_2+d_1d_2x
\end{matrix}\right),$$
Therefore
$$\rho(\sigma_j)(e_2)\in S\Leftrightarrow x=\dfrac{c_1d_2}{\omega_1d_1} $$ and 
$$\rho(\sigma_j)(e_3)\in S\Leftrightarrow x=\dfrac{d_1c_2}{\omega_2d_2},$$
\begin{align*}
\rho(\sigma_j)(e_1+xe_4)\in S&\Leftrightarrow \left(\begin{matrix}
\omega_1\omega_2d_1d_2+x\\
 \omega_1d_1c_2T_2^{1-j}+d_2T_1^{j-1}x\\ 
 \omega_2c_1d_2T_1^{1-j}+d_1T_2^{j-1}x\\ 
 c_1c_2+d_1d_2x
\end{matrix}\right)=
\left(\begin{matrix}
\lambda\\
 \alpha\\ 
 \beta\\ 
 \lambda x
\end{matrix}\right)\text{ for some } \alpha, \beta, \gamma\in\mathbb{C}.
\end{align*}
This is equivalent to $$ x^2+(\omega_1\omega_2d_1d_2-d_1d_2)x-c_1c_2=0.$$

Therefore, $S$ is invariant under $\rho(\sigma_j)$ ($j=1,2,3$) if and only if
\begin{equation}
x=\dfrac{c_1d_2}{\omega_1d_1}=\dfrac{c_2d_1}{\omega_2d_2}
\end{equation}
 and 
\begin{equation}
 c_1^2d_2^2+\omega_1d_1^2c_1d_2^2(\omega_1\omega_2-1)-\omega_1d_1^2c_1c_2=0.
 \end{equation}
The equations (3.3) and (3.4) imply that
\begin{equation*}
\omega_1d_1^2c_2=\omega_2d_2^2c_1
\end{equation*}
 and 
\begin{equation*}
(\omega_1\omega_2-1)(\omega_2d_2^2-c_2)=0.
\end{equation*}
Since $\omega_2d_2^2\neq c_2$, it follows that $$\omega_1\omega_2=1$$
Therefore the subspace $S$, which is spanned by $v=e_1-\frac{c_1d_2}{\omega_1d_1}e_4$, is  invariant under $\rho(\sigma_j)$ if $\omega_1\omega_2=1$ and $\omega_1d_1^2c_2= \omega_2d_2^2c_1$. This gives a contradiction.
\end{proof}
Now, we have the following proposition.
\begin{prop}
If $T_1=e^{i2\pi/3}$, $T_2=e^{-i2\pi/3}, \omega_1=e^{\pm i\pi/3}$ and $\omega_2=e^{\pm i\pi/3}$ then $\rho$ is irreducible representation of $\mathcal{NB}_3$ if and only if  $\omega_1\omega_2\neq 1$ or $\omega_2d_2^2c_1\neq\omega_1d_1^2c_2$.
\end{prop}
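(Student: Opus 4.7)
The statement is a straightforward assembly of the three preceding lemmas, so the plan is simply to stitch them together. By definition, $\rho : \mathcal{NB}_3 \to GL(4,\mathbb{C})$ is irreducible if and only if the underlying $4$-dimensional space admits no proper nonzero $\rho$-invariant subspace. Since such a subspace must have dimension $1$, $2$, or $3$, irreducibility is equivalent to the simultaneous absence of invariant subspaces in all three of these dimensions, and this is how I would organize the argument.

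First I would invoke Lemma 3.5, which characterizes the nonexistence of one-dimensional invariant subspaces by the condition $\omega_1\omega_2 \neq 1$ or $\omega_1 d_1^2 c_2 \neq \omega_2 d_2^2 c_1$. Next I would invoke Lemma 3.6, which shows that under the standing assumptions $c_1, c_2, d_1, d_2 \neq 0$ there are \emph{never} invariant subspaces of dimension $2$; this step is unconditional and contributes no extra hypothesis to the final equivalence. Finally I would apply Lemma 3.7, which rules out three-dimensional invariant subspaces under exactly the same condition as Lemma 3.5.

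Combining these three inputs, $\rho$ has no proper nonzero invariant subspace if and only if $\omega_1\omega_2 \neq 1$ or $\omega_1 d_1^2 c_2 \neq \omega_2 d_2^2 c_1$, which coincides with the stated hypothesis after trivially swapping the two sides of the second inequality. Since all the real computational work is already packaged into the three lemmas, there is no remaining technical obstacle; the only points meriting explicit mention are that the dimension-$1$ and dimension-$3$ criteria agree (so no logical ``or'' needs to be expanded into separate clauses) and that the dimension-$2$ case is automatic under the standing nondegeneracy assumptions $d_1 d_2 \neq 0$ and $c_1 c_2 \neq 0$.
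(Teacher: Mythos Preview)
Your proposal is correct and matches the paper's approach exactly: the paper treats Proposition 3.11 as an immediate consequence of the three preceding lemmas on invariant subspaces of dimensions $1$, $2$, and $3$ (just as it does explicitly for the analogous Proposition 3.15). The only slip is in the numbering---the relevant lemmas are 3.8, 3.9, and 3.10 in the paper's scheme, not 3.5, 3.6, 3.7---but the logical structure you describe is precisely what is intended.
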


\subsubsection{Case  ($T_1=T_2=e^{2\pi i/3}$) and ($T_1=T_2=e^{-2\pi i/3}$)}

In this case, we have $T_1=T_2$. Then
$$
\rho(\tau)=
t_1t_2\left(\begin{matrix}
e^{\mp i2\pi/3}&0&0&0\\
0&e^{\pm i2\pi/3}&0&0\\
0&0&e^{\pm i2\pi/3}&0\\
0&0&0&1
\end{matrix}\right)
$$ and
$$  \rho(\sigma_j)=
\left(\begin{matrix}
\omega_1\omega_2d_1d_2&\omega_1d_1T_1^{j-1}&\omega_2d_2T_1^{j-1}&(T_1)^{2j-2}\\
\omega_1d_1c_2T_1^{1-j}&\omega_1d_1d_2&c_2&d_2T_1^{j-1}\\
\omega_2c_1d_2T_1^{1-j}&c_1&\omega_2d_2d_1&d_1T_1^{j-1}\\
c_1c_2(T_1)^{2-2j}&c_1d_2T_1^{1-j}&d_1c_2T_1^{1-j}&d_1d_2
\end{matrix}\right)$$
for $j=1,2,3$. The eigenvalues of $\rho(\tau)$ are then $\lambda_2=t_1t_2e^{\pm i2\pi/3}$ (of multiplicity 2) and $\lambda_1=e^{\mp i2\pi/3}t_1t_2$  and $\lambda_3=t_1t_2$. The corresponding eigenvectors are $xe_2+ye_3$ for $\lambda_2$ and $e_1$ and $e_4$ for $\lambda_1$ and $\lambda_3$ respectively.\\

\begin{lem}
If $T_1=T_2=e^{\pm2\pi i/3},\omega_1=e^{\pm i\pi/3}$ and $\omega_2=e^{\pm i\pi/3}$ then the representation $\rho$ of $\mathcal{NB}_3$ has no invariant proper subspaces of dimension 1 if and only if  $\omega_1\neq\omega_2$ or $\omega_1d_1^2c_2\neq\omega_2d_2^2c_1$.
\end{lem}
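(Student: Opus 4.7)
The eigenspace analysis of $\rho(\tau)$ displayed just before the lemma shows that the one-dimensional $\rho(\tau)$-invariant subspaces are exactly $\langle e_1\rangle$, $\langle e_4\rangle$, and $\langle xe_2+ye_3\rangle$ for $(x,y)\in\mathbb{C}^2\setminus\{(0,0)\}$, since only $\lambda_2$ has a two-dimensional eigenspace while $\lambda_1,\lambda_3$ have simple eigenspaces $\langle e_1\rangle,\langle e_4\rangle$. The plan is to test each family against $\rho(\sigma_1)$ and reduce the joint invariance condition to the stated algebraic criterion.

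The two trivial cases dispose of themselves: reading off the first and fourth columns of $\rho(\sigma_1)$, the vector $\rho(\sigma_1)(e_1)$ has $e_4$-component $c_1c_2\neq 0$ and $\rho(\sigma_1)(e_4)$ has $e_1$-component $1$, so neither $\langle e_1\rangle$ nor $\langle e_4\rangle$ is invariant under $\rho$.

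The main case is $v=xe_2+ye_3$. Combining the second and third columns of $\rho(\sigma_j)$ gives
$$\rho(\sigma_j)(v)=\begin{pmatrix}(\omega_1 d_1 x+\omega_2 d_2 y)T_1^{j-1}\\ \omega_1 d_1 d_2\, x+c_2 y\\ c_1 x+\omega_2 d_1 d_2\, y\\ (c_1 d_2 x+d_1 c_2 y)T_1^{1-j}\end{pmatrix}.$$
Requiring this column to be a scalar multiple of $(0,x,y,0)^T$ forces the first and fourth entries to vanish, producing the linear system $\omega_1 d_1 x+\omega_2 d_2 y=0$ and $c_1 d_2 x+d_1 c_2 y=0$, while the middle entries yield the eigenvalue equations $\omega_1 d_1 d_2 x+c_2 y=\lambda x$ and $c_1 x+\omega_2 d_1 d_2 y=\lambda y$. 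The first two admit a nonzero solution exactly when their determinant vanishes, i.e.\ when $\omega_1 d_1^2 c_2=\omega_2 d_2^2 c_1$, in which case $y=-\frac{\omega_1 d_1}{\omega_2 d_2}\,x$.

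Substituting this ratio into the two eigenvalue equations produces two expressions for $\lambda$; eliminating $\lambda$ and using the relation $\omega_1 d_1^2 c_2=\omega_2 d_2^2 c_1$ to collapse common factors yields, after a short manipulation, the identity
$$(\omega_1-\omega_2)\,\omega_2 d_2^2\,(\omega_1 d_1^2-c_1)=0.$$
The standing hypothesis from Proposition~2.1 in the $n=3$ case gives $c_1\neq\omega_1 d_1^2$, and $\omega_2,d_2\neq 0$, so this forces $\omega_1=\omega_2$. Therefore an invariant line of the form $\langle xe_2+ye_3\rangle$ exists if and only if both $\omega_1=\omega_2$ and $\omega_1 d_1^2 c_2=\omega_2 d_2^2 c_1$; negating this conjunction gives exactly the condition stated in the lemma. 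The only non-routine step is the elimination of $\lambda$ together with the algebraic simplification producing the displayed factorization; the rest is immediate from the matrix formulas and the basic hypotheses $c_i,d_i\neq 0$.
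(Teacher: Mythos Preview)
Your proof is correct and follows essentially the same approach as the paper's: enumerate the $\rho(\tau)$-invariant lines, dismiss $\langle e_1\rangle$ and $\langle e_4\rangle$ by inspection, then analyze $\langle xe_2+ye_3\rangle$ by forcing the first and fourth components of $\rho(\sigma_j)(v)$ to vanish and matching the middle components to a common eigenvalue. The only cosmetic difference is that the paper normalizes to $v=e_2+xe_3$ and, after substituting into the quadratic, factors the resulting relation as $(\omega_1-\omega_2)(c_2-\omega_2 d_2^2)=0$ and invokes $c_2\neq\omega_2 d_2^2$, whereas you substitute the other way and obtain the symmetric factor $(\omega_1 d_1^2-c_1)$, invoking $c_1\neq\omega_1 d_1^2$; both are standing hypotheses from Proposition~2.1, so either route works.
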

\begin{proof}
By direct calculations, we check that each of the subspaces spanned by the vectors $e_j$ are not invariant under $\rho(\sigma_1)$.\\
It remains to consider the subspace $S$ spanned by the vector $v=e_2+xe_3$ with $x\in\mathbb{C}^*$.
Since $T_1=T_2$, it follows that $$\rho(\sigma_j)(v)=\left(\begin{matrix}
\omega_1d_1T_1^{j-1}+\omega_2d_2T_1^{j-1}x\\
\omega_1d_1d_2+c_2x\\
c_1+\omega_2d_2d_1x\\
c_1d_2T_1^{1-j}+d_1c_2T_1^{1-j}x
\end{matrix}\right)=
\left(\begin{matrix}
(\omega_1d_1+\omega_2d_2x)T_1^{j-1}\\
\omega_1d_1d_2+c_2x\\
c_1+\omega_2d_2d_1x\\
(c_1d_2+d_1c_2x)T_1^{1-j}
\end{matrix}\right).$$ So 
\begin{align*}
\rho(\sigma_j)(v)\in S & \Leftrightarrow  \rho(\sigma_j)(v)=\lambda v \text{ for some }\lambda\in\mathbb{C} \\ &
\Leftrightarrow x=\frac{-\omega_1d_1}{\omega_2d_2}=\frac{-c_1d_2}{d_1c_2} \text{ and } c_2 x^2+(\omega_1-\omega_2)d_1d_2x-c_1=0\\ &\Leftrightarrow \omega_1d_1^2c_2=\omega_2d_2^2c_1\text{ and }
c_2\omega_1^2d_1^2+(\omega_2-\omega_1)d_1^2d_2^2\omega_1\omega_2-c_1\omega_2^2d_2^2=0\\ &\Leftrightarrow \omega_1d_1^2c_2=\omega_2d_2^2c_1\text{ and }
(\omega_1-\omega_2)(c_2-\omega_2d_2^2)=0\\& \Leftrightarrow
\omega_1d_1^2c_2=\omega_2d_2^2c_1\text{ and }
\omega_1=\omega_2\text{ because } c_2\neq\omega_2d_2^2
\end{align*}
Therefore the subspace $S$, which is spanned by $v=e_2-\frac{\omega_1d_1}{\omega_2d_2}e_3$, is  invariant under $\rho(\sigma_j)$ if $\omega_1=\omega_2$ and $\omega_1d_1^2c_2= \omega_2d_2^2c_1$.
This gives a contradiction.
\end{proof}
\begin{lem}
If $T_1=T_2=e^{\pm2\pi i/3}$ and $\omega_1=e^{\pm i\pi/3}$ and $\omega_2=e^{\pm i\pi/3}$  then the representation $\rho$ of $\mathcal{NB}_3$ has no invariant proper subspaces of dimension 2.
\end{lem}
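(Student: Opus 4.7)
The plan is to follow the same blueprint used in Lemma 3.6, adapted to the fact that in the present case it is $\lambda_{2}=t_{1}t_{2}e^{\pm i2\pi/3}$ (rather than $\lambda_{1}=t_{1}t_{2}$) that has multiplicity two, with $2$-dimensional eigenspace $\langle e_{2},e_{3}\rangle$, while $\lambda_{1}$ and $\lambda_{3}$ are simple with eigenvectors $e_{1}$ and $e_{4}$ respectively.

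First I would enumerate the candidate $\rho(\tau)$-invariant subspaces of dimension $2$. Any such subspace is either spanned by two eigenvectors belonging to distinct eigenspaces, or coincides with the full two-dimensional eigenspace. This gives four families to examine:
\[
\langle e_{1},e_{4}\rangle,\qquad \langle e_{2},e_{3}\rangle,\qquad \langle e_{1},xe_{2}+ye_{3}\rangle,\qquad \langle e_{4},xe_{2}+ye_{3}\rangle,
\]
with $(x,y)\neq(0,0)$ in the last two.

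Second, I would dispatch each family by evaluating $\rho(\sigma_{1})$ on a convenient basis vector of the subspace and reading off a coordinate that cannot vanish. Concretely, using the explicit form of $\rho(\sigma_{1})$ displayed in the section: the vector $\rho(\sigma_{1})(e_{1})=(\omega_{1}\omega_{2}d_{1}d_{2},\,\omega_{1}d_{1}c_{2},\,\omega_{2}c_{1}d_{2},\,c_{1}c_{2})^{T}$ has nonzero components in the $e_{2}$, $e_{3}$ and $e_{4}$ slots (because $c_{1},c_{2},d_{1},d_{2},\omega_{1},\omega_{2}$ are all nonzero), which immediately rules out $\langle e_{1},e_{4}\rangle$ and every $\langle e_{1},xe_{2}+ye_{3}\rangle$ (the $e_{4}$-coordinate is an obstruction). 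Likewise $\rho(\sigma_{1})(e_{4})=(1,d_{2},d_{1},d_{1}d_{2})^{T}$ has a nonzero $e_{1}$-coordinate, which eliminates every $\langle e_{4},xe_{2}+ye_{3}\rangle$. Finally $\rho(\sigma_{1})(e_{2})=(\omega_{1}d_{1},\omega_{1}d_{1}d_{2},c_{1},c_{1}d_{2})^{T}$ has nonzero $e_{1}$- and $e_{4}$-coordinates, so $\langle e_{2},e_{3}\rangle$ is not invariant either.

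I do not anticipate a genuine obstacle here: unlike the three-dimensional case (Lemma 3.9) where a spurious invariant subspace forced the extra hypothesis $\omega_{1}\omega_{2}\neq 1$ or $\omega_{1}d_{1}^{2}c_{2}\neq\omega_{2}d_{2}^{2}c_{1}$, in the two-dimensional setting the obstructions are just the non-vanishing of fixed entries of $\rho(\sigma_{1})$, which hold unconditionally under the standing hypothesis $d_{1}d_{2}\neq 0$ together with $c_{1},c_{2}\neq 0$ coming from Proposition 2.1. Thus no additional restriction on the parameters is needed, and the lemma follows by the finite case check outlined above.
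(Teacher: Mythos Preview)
Your proposal is correct and follows essentially the same approach as the paper: enumerate the $\rho(\tau)$-invariant two-dimensional subspaces and rule each one out by exhibiting a coordinate of $\rho(\sigma_{1})(e_{k})$ that cannot vanish, using only $c_{1},c_{2},d_{1},d_{2}\neq 0$. The paper organises the list slightly differently (writing $\langle e_{i},e_{j}\rangle$ for all $i\neq j$, then $\langle e_{1},e_{2}+xe_{3}\rangle$ and $\langle e_{4},e_{2}+ye_{3}\rangle$ with $x,y\neq 0$), but the content of the argument is identical.
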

\begin{proof}
The possible  two dimensional invariant subspaces candidates to study are the following:
\begin{itemize}
\item[1.] $\langle e_i,e_j\rangle$ for $i\neq j$
\item[2.] $\langle e_1,e_2+xe_3\rangle$ for $x\neq0$
\item[3.] $\langle e_4,e_2+ye_3\rangle$ for $y\neq0$
\end{itemize} 
Since $c_1\neq0, c_2\neq0$, $d_1\neq0$ and $d_2\neq0$, it follows that none of the subspaces mentioned above is invariant under $\rho(\sigma_1).$
\end{proof}
\begin{lem}
If $T_1=T_2=e^{\pm2\pi i/3}$, $\omega_1=e^{\pm i\pi/3}$ and $\omega_2=e^{\pm i\pi/3}$ then the representation $\rho$ of $\mathcal{NB}_3$ has no invariant proper subspaces of dimension 3 if and only if  $\omega_1\neq\omega_2$ or $\omega_1d_1^2c_2\neq\omega_2d_2^2c_1$.
\end{lem}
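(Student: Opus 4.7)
The plan is to mirror the strategy of the earlier analogous lemma (for the $T_1\neq T_2$ case) but adapted to the eigenspace decomposition of $\rho(\tau)$ when $T_1=T_2$. Here $\rho(\tau)$ has three distinct eigenvalues with eigenspaces $\langle e_1\rangle$, $\langle e_2,e_3\rangle$ (of dimension two), and $\langle e_4\rangle$. Since $\rho(\tau)$ is already diagonal in this basis, every $\rho(\tau)$-invariant three-dimensional subspace decomposes as a direct sum of pieces of these eigenspaces and therefore falls into one of three families:
\begin{itemize}
\item[(i)] $\langle e_1,e_2,e_3\rangle$;
\item[(ii)] $\langle e_2,e_3,e_4\rangle$;
\item[(iii)] $\langle e_1,e_4,\,xe_2+ye_3\rangle$ with $(x,y)\neq(0,0)$.
\end{itemize}

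I would first eliminate families (i) and (ii) by inspecting the matrix of $\rho(\sigma_1)$ displayed at the start of this subsection: its first column shows that $\rho(\sigma_1)(e_1)$ has $e_4$-component equal to $c_1c_2$, and its fourth column shows that $\rho(\sigma_1)(e_4)$ has $e_1$-component equal to $1$. Since $c_1c_2\neq 0$, neither subspace is invariant under $\rho(\sigma_1)$.

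For family (iii), set $S=\langle e_1,e_4,xe_2+ye_3\rangle$ and apply $\rho(\sigma_j)$ to each generator, demanding that the resulting $e_2$- and $e_3$-components be proportional to $(x,y)$ (the $e_1$- and $e_4$-contributions automatically lie in $S$). Reading off the columns of $\rho(\sigma_j)$, the condition $\rho(\sigma_j)(e_1)\in S$ forces $y/x=\omega_2c_1d_2/(\omega_1d_1c_2)$, while $\rho(\sigma_j)(e_4)\in S$ forces $y/x=d_1/d_2$; in each case the factor $T_1^{j-1}$ cancels, so the constraints are uniform in $j$. Equating the two ratios yields the necessary identity
$$\omega_1d_1^2c_2=\omega_2d_2^2c_1.$$
Substituting $y/x=d_1/d_2$ into the proportionality arising from $\rho(\sigma_j)(xe_2+ye_3)\in S$, namely
$$c_2(y/x)^2+(\omega_1-\omega_2)d_1d_2(y/x)-c_1=0,$$
and using the previous identity to eliminate $c_1d_2^2$, the equation collapses to
$$(\omega_1-\omega_2)d_1^2(\omega_2d_2^2-c_2)=0.$$
Since $d_1\neq 0$ and Proposition 2.1 guarantees $c_2\neq\omega_2d_2^2$, this forces $\omega_1=\omega_2$.

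Assembling the two directions, $S$ is invariant under every $\rho(\sigma_j)$ precisely when both $\omega_1=\omega_2$ and $\omega_1d_1^2c_2=\omega_2d_2^2c_1$ hold, which is exactly the contrapositive of the stated lemma. I expect the main obstacle to be family (iii), because families (i) and (ii) are dispatched immediately by $c_1c_2\neq 0$, whereas the analysis of $S$ requires tracking three simultaneous proportionality constraints and relies critically on the Proposition 2.1 exclusion $c_i\neq\omega_id_i^2$ to eliminate the spurious $\omega_1\neq\omega_2$ branch that would otherwise survive.
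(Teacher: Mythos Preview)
Your proposal is correct and follows essentially the same route as the paper: both arguments list the $\rho(\tau)$-invariant three-dimensional subspaces, dispose of the coordinate subspaces using $c_1c_2\neq 0$ (and $d_1d_2\neq 0$), and then analyze $\langle e_1,e_4,xe_2+ye_3\rangle$ by imposing the same three proportionality constraints, arriving at $\omega_1d_1^2c_2=\omega_2d_2^2c_1$ together with a factored equation that forces $\omega_1=\omega_2$ via the Proposition~2.1 exclusion. The only cosmetic difference is that the paper eliminates $c_2d_1^2$ instead of $c_1d_2^2$, obtaining the equivalent factor $(\omega_1-\omega_2)(\omega_1d_1^2-c_1)=0$ and invoking $c_1\neq\omega_1d_1^2$ rather than $c_2\neq\omega_2d_2^2$.
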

\begin{proof}
The possible invariant three dimensional invariant subspaces to study are:
 \begin{itemize}
\item[1.] $\langle e_i,e_j,e_k\rangle$ where $i, j, k$ are pairwise distinct
\item[2.] $\langle e_1,e_4,e_2+xe_3\rangle$ for $x\neq0$
\end{itemize}
By direct computations, it is easy to check that all the subspaces in case 1 are not invariant under $\rho(\sigma_1)$.\\
In case 2, we let $S$ be a subspace of the form $\langle e_1,e_4,e_2+xe_3\rangle$ with $x\neq0$.\\
$$\rho(\sigma_j)(e_1)=\left(\begin{matrix}
\omega_1\omega_2d_1d_2\\
 \omega_1d_1c_2T_1^{1-j}\\ 
 \omega_2c_1d_2T_1^{1-j}T_2^{j-1}\\ 
 c_1c_2T_1^{2-2j}
\end{matrix}\right),
~\rho(\sigma_j)(e_4)=\left(\begin{matrix}
T_1^{2j-2}\\
 d_2T_1^{j-1}\\ 
 d_1T_1^{j-1}\\ 
 d_1d_2
\end{matrix}\right)$$
and $$\rho(\sigma_j)(e_2+xe_3)=\left(\begin{matrix}
\omega_1d_1T_1^{j-1}+\omega_2d_2T_1^{j-1}x\\
\omega_1d_1d_2+c_2x\\ 
c_1+\omega_2d_2d_1x\\ 
c_1d_2T_1^{1-j}+d_1c_2T_1^{1-j}x
\end{matrix}\right).$$
Therefore, we get
$$\rho(\sigma_j)(e_1)\in S\Leftrightarrow x=\dfrac{\omega_2c_1d_2}{\omega_1d_1c_2} $$ and 
$$\rho(\sigma_j)(e_4)\in S\Leftrightarrow x=\dfrac{d_1}{d_2},$$
\begin{align*}
\rho(\sigma_j)(e_2+xe_3)\in S&\Leftrightarrow \begin{pmatrix}
\omega_1d_1T_1^{j-1}+\omega_2d_2T_1^{j-1}x\\
\omega_1d_1d_2+c_2x\\ 
c_1+\omega_2d_2d_1x\\ 
c_1d_2T_1^{1-j}+d_1c_2T_1^{1-j}x
\end{pmatrix}=
\left(\begin{matrix}
\alpha\\
 \lambda\\ 
 \lambda x\\
 \beta
\end{matrix}\right)\text{ for some } \alpha, \beta, \gamma\in\mathbb{C}.
\end{align*}
This is equivalent to $$c_2x^2+(\omega_1-\omega_2)d_1d_2x-c_1=0.$$

Therefore $S$ is invariant under $\rho(\sigma_j)$ ($j=1,2,3$) if and only if:
\begin{equation}
x=\dfrac{d_1}{d_2}=\dfrac{\omega_2c_1d_2}{\omega_1d_1c_2}
\end{equation}
 and 
\begin{equation}
 c_2d_1^2+(\omega_1-\omega_2)d_1^2d_2^2-c_1d_2^2=0
 \end{equation}
The equations (3.5) and (3.6) imply that 
\begin{equation*}
\omega_1d_1^2c_2=\omega_2d_2^2c_1
\end{equation*}
 and 
\begin{equation*}
(\omega_1-\omega_2)(\omega_1d_1^2-c_1)=0.
\end{equation*}
Since $\omega_1d_1^2\neq c_1$, it follows that $\omega_1=\omega_2$, which gives a contradiction.
Therefore the subspace $S$, that is spanned by $v=e_2-\frac{d_1}{d_2}e_3$, is  invariant under $\rho(\sigma_j)$ if $\omega_1=\omega_2$ and $\omega_1d_1^2c_2= \omega_2d_2^2c_1$.
\end{proof}

\begin{prop}
If $T_1=T_2=e^{\pm 2\pi i/3}$, $\omega_1=e^{\pm i\pi/3}$ and $\omega_2=e^{\pm i\pi/3}$ then $\rho$ is an irreducible representation of $\mathcal{NB}_3$ if and only if $\omega_1\neq\omega_2$ or $\omega_2d_2^2c_1\neq\omega_1d_1^2c_2$.
\end{prop}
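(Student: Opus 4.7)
The representation $\rho$ acts on a 4-dimensional space, so it is irreducible if and only if it admits no non-trivial proper invariant subspace, i.e., no invariant subspace of dimension 1, 2, or 3. The plan is to simply assemble the three preceding lemmas.

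By the dimension-1 lemma, $\rho$ has no invariant subspace of dimension 1 if and only if
$$\omega_1\neq\omega_2 \quad \text{or} \quad \omega_1d_1^2c_2\neq\omega_2d_2^2c_1.$$
By the dimension-2 lemma, $\rho$ has no invariant subspace of dimension 2 (with no extra hypothesis, since $c_1,c_2,d_1,d_2\neq 0$ already forces this). By the dimension-3 lemma, $\rho$ has no invariant subspace of dimension 3 if and only if the same condition $\omega_1\neq\omega_2$ or $\omega_1d_1^2c_2\neq\omega_2d_2^2c_1$ holds.

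Combining these three statements, $\rho$ is irreducible if and only if $\omega_1\neq\omega_2$ or $\omega_2d_2^2c_1\neq\omega_1d_1^2c_2$, as claimed. The main work has already been carried out in the three lemmas; no new obstacle appears at this stage.
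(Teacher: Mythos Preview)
Your proof is correct and matches the paper's approach exactly: the paper's proof simply states that the result follows directly from Lemma 3.12, Lemma 3.13, and Lemma 3.14, which are precisely the dimension-1, dimension-2, and dimension-3 lemmas you invoke. There is nothing to add.
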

\begin{proof}
The proof follows directly from Lemma 3.12, Lemma 3.13 and Lemma 3.14.
\end{proof}
\begin{thm}
The representation $\rho$ of $\mathcal{NB}_3$ is irreducible if and only if one of the following conditions hold.
\begin{itemize}
\item[1.] $T_1=e^{2\pi i/3}, T_2=e^{-2\pi i/3}$ and $\omega_1\omega_2\neq1$
\item[2.] $T_1=e^{2\pi i/3}, T_2=e^{-2\pi i/3}$ and $\omega_1d_1^2c_2\neq\omega_2d_2^2c_1$
\item[3.] $T_1=T_2=e^{\pm2\pi i/3}$ and $\omega_1\neq\omega_2$
\item[4.] $T_1=T_2=e^{\pm2\pi i/3}$ and $\omega_1d_1^2c_2\neq\omega_2d_2^2c_1$
\end{itemize}
\end{thm}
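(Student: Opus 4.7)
The plan is to observe that Theorem 3.16 is a direct consolidation of the two case-specific results established just above. By Proposition 2.1, for $n=3$ every irreducible 2-dimensional representation has its $T$-parameter equal to a primitive third root of unity, so $T_1,T_2\in\{e^{2\pi i/3},e^{-2\pi i/3}\}$. Up to the labeling of the two tensor factors, there are exactly two possibilities: either $T_1\neq T_2$ (so we may take $T_1=e^{2\pi i/3}$ and $T_2=e^{-2\pi i/3}$), or $T_1=T_2=e^{\pm 2\pi i/3}$. These are precisely the two cases analyzed in Sections 3.2.1 and 3.2.2, so the theorem is already within reach.

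My proof would then simply invoke Proposition 3.10 for the first case and Proposition 3.15 for the second. Proposition 3.10 asserts that when $T_1\neq T_2$, $\rho$ is irreducible iff $\omega_1\omega_2\neq 1$ or $\omega_1d_1^2c_2\neq\omega_2d_2^2c_1$; splitting this disjunction into its two clauses yields conditions (1) and (2) of the theorem. Proposition 3.15 asserts that when $T_1=T_2$, $\rho$ is irreducible iff $\omega_1\neq\omega_2$ or $\omega_1d_1^2c_2\neq\omega_2d_2^2c_1$; splitting again yields conditions (3) and (4). Since the two cases are mutually exclusive and collectively exhaustive over the admissible choices of $(T_1,T_2)$, it follows that $\rho$ is irreducible if and only if at least one of the four listed conditions holds.

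There is no substantive obstacle to overcome in proving Theorem 3.16 itself: the genuine work was already carried out in Lemmas 3.6--3.8 (for $T_1\neq T_2$) and Lemmas 3.12--3.14 (for $T_1=T_2$), where invariant subspaces of dimensions 1, 2, and 3 were systematically ruled out or characterized by the stated inequalities. Theorem 3.16 is essentially a bookkeeping statement that repackages both propositions into a unified \emph{if and only if}, so its proof reduces to a one-line appeal to Propositions 3.10 and 3.15 combined with the exhaustive case split on $(T_1,T_2)$ provided by Proposition 2.1.
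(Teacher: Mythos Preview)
Your proposal is correct and takes essentially the same approach as the paper: the paper's proof is a single line invoking Proposition 3.11 and Proposition 3.15, exactly as you describe. Note only that your numbering is slightly off (the relevant proposition for the $T_1\neq T_2$ case is 3.11, not 3.10, and the supporting lemmas are 3.8--3.10 and 3.12--3.14).
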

\begin{proof}
The proof follows directly from Proposition 3.11 and Proposition 3.15 
\end{proof}
\medskip
\subsection{Representation of $\mathcal{NB}_2$}

In this section, we consider the representation $\rho$ given by Definition 3.1. In this case we have $T_1=T_2=-1$. A similar work to that done for $\mathcal{NB}_4$ (Theorem 3.4), we get the following theorem.
\begin{thm}
The representation $\rho$ is irreducible if and only if $a_1a_2\neq d_1d_2$ and $a_1d_2\neq a_2d_1$.
\end{thm}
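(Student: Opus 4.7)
The plan is to transport the argument of Theorem 3.4 to the smaller group $\mathcal{NB}_2$. First I would write down the explicit matrices for $\rho(\tau)$, $\rho(\sigma_1)$, $\rho(\sigma_2)$ from Definition 3.1 specialised to $T_1=T_2=-1$. The resulting $\rho(\tau)$ is diagonal with entries $t_1t_2(1,-1,-1,1)$, and the matrices for $\rho(\sigma_1)$, $\rho(\sigma_2)$ coincide with those displayed in \S3.1.1 (the case $T_1=T_2=-1$ of $\mathcal{NB}_4$); the $\sigma_3,\sigma_4$ appearing in that case are just $\sigma_1,\sigma_2$ again and so add no new invariance conditions.

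Next I would use $\rho(\tau)$ to restrict the candidate invariant subspaces. The eigenspaces of $\rho(\tau)$ are $E_{t_1t_2}=\langle e_1,e_4\rangle$ and $E_{-t_1t_2}=\langle e_2,e_3\rangle$, each of multiplicity $2$, and any $\rho(\tau)$-invariant subspace $W$ must split as $W=(W\cap E_{t_1t_2})\oplus(W\cap E_{-t_1t_2})$. Hence the candidates one needs to test against $\rho(\sigma_1)$ are exactly the same short list that appears in Propositions~3.2, 3.3, and~3.3: in dimension $1$ the coordinate lines $\langle e_i\rangle$ together with $\langle e_1+ye_4\rangle$ and $\langle e_2+xe_3\rangle$; in dimension $2$ the six two-element spans listed in Proposition~3.3; and in dimension $3$ the six three-element spans listed in the second Proposition~3.3.

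I would then verbatim re-run the calculations from those three propositions. Because under our running hypotheses $c_1,c_2,d_1,d_2\neq 0$, the pure coordinate candidates (and those containing a coordinate axis from one eigenspace with a mixed vector from the other) are immediately killed by the non-vanishing of a matrix entry of $\rho(\sigma_1)$, exactly as in \S3.1.1. The genuinely non-trivial cases are the mixed spans $\langle e_1+ye_4\rangle$, $\langle e_2+xe_3\rangle$, $\langle e_1+xe_4,e_2+ye_3\rangle$, $\langle e_1,e_4,e_2+xe_3\rangle$, $\langle e_1+ye_4,e_2,e_3\rangle$. For each of these the system of equations coming from $\rho(\sigma_1)$-invariance is identical to the one derived in the $\mathcal{NB}_4$ proof (checking only $\sigma_1$ is enough here, whereas in $\mathcal{NB}_4$ one additionally checked $\sigma_2$, which under $T_1=T_2=-1$ gives no further constraint). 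These simplify, using $c_i=a_i^2-a_id_i+d_i^2$, to the two alternatives $a_1a_2=d_1d_2$ and $a_1d_2=a_2d_1$.

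Putting these together, $\rho$ admits a non-trivial proper invariant subspace iff $a_1a_2=d_1d_2$ or $a_1d_2=a_2d_1$, which is the claimed biconditional. The only potential obstacle is that $\mathcal{NB}_2$ has fewer generators than $\mathcal{NB}_4$, so a priori more subspaces could be invariant; the key observation that resolves this is the equality $\rho(\sigma_3)=\rho(\sigma_1)$, $\rho(\sigma_4)=\rho(\sigma_2)$ in the $T_1=T_2=-1$ case of $\mathcal{NB}_4$, which ensures that testing invariance under $\sigma_1$ alone gives the same system as testing under all four generators in the $\mathcal{NB}_4$ setting. No new calculation is therefore needed beyond flagging this equality at the start of the proof.
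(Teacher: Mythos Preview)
Your proposal is correct and is precisely the paper's approach: the paper's own proof is just the one-line remark that the work is the same as for Theorem~3.4, and you have articulated the reason this transfer is legitimate --- in the $T_1=T_2=-1$ case one has $\rho(\sigma_3)=\rho(\sigma_1)$ and $\rho(\sigma_4)=\rho(\sigma_2)$, so the images of $\mathcal{NB}_2$ and $\mathcal{NB}_4$ under $\rho$ coincide and the invariant-subspace analysis of Propositions~3.1--3.3 applies verbatim. The only cosmetic points are the numbering (the three propositions are 3.1, 3.2, 3.3, not ``3.2, 3.3, and 3.3'') and the phrasing that ``checking only $\sigma_1$ is enough'': $\mathcal{NB}_2$ does have $\sigma_2$ as a generator, so it must still be checked, but as your parenthetical correctly observes, the $\pm$ computations in \S3.1.1 already show it imposes no constraint beyond $\sigma_1$.
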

\vspace{.4cm}
\section{Unitary representations of $\mathcal{NB}_n$ of dimension 2 ($n=2,3,4$)}
\bigskip
\begin{defn}
A square matrix $A$ is unitary relative to a matrix $M$ if $AMA^*=M$. Here $A^*$ is the conjugate transpose of $A$.
\end{defn}  
\begin{defn}
A representation $\rho:G\to GL(n,\mathbb{C})$ of a group $G$ is called unitary if $\rho(g)$ is unitary for every element $g\in G$.
\end{defn}
We consider the irreducible representations of $\mathcal{NB}_n$, given by Proposition 2.1. Then, we determine a necessary and sufficient condition under which these representations are unitary relative to some hermitian positive definite matrix.\\
Recall that
$$\rho(\tau)=\left(\begin{matrix}
Tt&0\\
0&t
\end{matrix}\right), ~ 
\rho(\sigma_j)=\begin{pmatrix}
a&T^{j-1}\\
cT^{1-j}&d
\end{pmatrix},$$
where $t$ is a $(2n)$-th root of unity and $T\in\{-1,i,-i,e^{i2\pi/3},e^{-i2\pi/3}\}$.
\begin{lem}
Let $\rho$ be a complex irreducible representation of $\mathcal{NB}_n$ ($n=2,3,4$), given by Proposition 2.1. If $\rho$ is unitary relative to a matrix $M=\left(\begin{matrix}
x&y\\
z&u
\end{matrix}\right)$ then $y=z=0$ and $$\left\{\begin{array}{l}
(|a|^2-1)x+u=0\\
|c|^2x+(|d|^2-1)u=0\\
a\bar{c}x+\bar{d}u=0
\end{array}\right..$$
\end{lem}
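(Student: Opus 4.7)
The plan is to exploit the definition of unitarity, $\rho(g)M\rho(g)^{\ast} = M$, applied to the generators of $\mathcal{NB}_n$. It will suffice to feed in only $\tau$ and $\sigma_1$: the first pins down the off-diagonal entries of $M$, and the second produces the three scalar equations. The relation (N1), together with the unitarity of $\rho(\tau)$ already established, ensures that $\rho(\sigma_j)$ is automatically unitary for the remaining $j$, so no further generators need be tested.

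First I would compute $\rho(\tau) M \rho(\tau)^{\ast}$ using $\rho(\tau) = \mathrm{diag}(Tt,t)$. Because both $t$ and $Tt$ are $(2n)$th roots of unity, $|t|=|Tt|=1$; hence the diagonal entries of $M$ are preserved, while the $(1,2)$ and $(2,1)$ entries pick up factors of $T$ and $\overline{T}$, respectively. Matching against $M$ then reduces to $Ty = y$ and $\overline{T} z = z$. Since the proof of Proposition 2.1 forces $T \neq 1$ (and hence $\overline{T} = T^{-1} \neq 1$ as well), this yields $y = z = 0$.

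Next, armed with $M = \mathrm{diag}(x,u)$, I would expand $\rho(\sigma_1) M \rho(\sigma_1)^{\ast}$ using $\rho(\sigma_1) = \begin{pmatrix} a & 1 \\ c & d \end{pmatrix}$. A direct calculation gives
\[
\rho(\sigma_1) M \rho(\sigma_1)^{\ast} = \begin{pmatrix} |a|^2 x + u & a\bar{c}\,x + \bar{d}\,u \\ \bar{a} c\,x + d\,u & |c|^2 x + |d|^2 u \end{pmatrix},
\]
and matching against $M = \mathrm{diag}(x,u)$ reads off the three stated equations; the lower-left entry merely yields the complex conjugate of the off-diagonal condition and is therefore redundant.

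The step deserving the most care is the first: one must notice that conjugation by $\rho(\tau)$ leaves the diagonal of $M$ invariant (because $t$ is a root of unity) and scales only the off-diagonal entries by $T$ and $\overline{T}$, and then invoke $T \neq 1$ to kill those entries. Once $M$ has been reduced to diagonal form, the remaining work is routine $2\times 2$ matrix bookkeeping.
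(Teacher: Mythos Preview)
Your proposal is correct and follows essentially the same approach as the paper: apply the unitarity condition first to $\rho(\tau)$ to force $y=z=0$ via $T\neq 1$, then to a $\sigma$-generator with $M$ now diagonal to read off the three scalar equations. The only cosmetic difference is that the paper carries out the second computation for a general $\sigma_j$ and observes that the $T^{j-1}$ factors cancel (since $\bar T=T^{-1}$), whereas you work with $\sigma_1$ directly and appeal to (N1) to cover the remaining generators; both are equally valid and yield the same equations.
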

\begin{proof}
Since $\rho$ is unitary relative to a matrix 
$M=\left(\begin{matrix}
x&y\\
z&u
\end{matrix}\right)$, we have $\rho(\tau)M\rho(\tau)^*=M$.
Thus we get $$\left(\begin{matrix}
Tt&0\\
0&t
\end{matrix}\right)
\left(\begin{matrix}
x&y\\
z&u
\end{matrix}\right)
\left(\begin{matrix}
\bar{T}\bar{t}&0\\
0&\bar{t}
\end{matrix}\right)
=\left(\begin{matrix}
x&y\\
z&u
\end{matrix}\right).$$
 Then we get
$$\left(\begin{matrix}
x&Ty\\
\bar{T}z&u
\end{matrix}\right)=
\begin{pmatrix}
x&y\\
z&u
\end{pmatrix}.$$
It follows that $$Ty=y \text{ and } \bar{T}z=z.$$
Therefore $y=z=0$ because $T\neq 1$.\\

We also have $\rho(\sigma_j)M\rho(\sigma_j)^*=M$ ($j=1,2,3,4$).\\\\
Then we get
$$
\begin{pmatrix}
a&T^{j-1}\\
cT^{1-j}&d
\end{pmatrix}
\begin{pmatrix}
x&0\\
0&u
\end{pmatrix}
\begin{pmatrix}
\bar{a}&\bar{c}\bar{T}^{1-j}\\
\bar{T}^{j-1}&\bar{d}
\end{pmatrix}
=\begin{pmatrix}
x&0\\
0&u
\end{pmatrix}.$$
So we have
$$\begin{pmatrix}
|a|^2x+u&a\bar{c}\bar{T}^{1-j}x+\bar{d}T^{j-1}u\\
c\bar{a}T^{1-j}x+d\bar{T}^{j-1}u&|c|^2x+|d|^2u
\end{pmatrix}
=\begin{pmatrix}
x&0\\
0&u
\end{pmatrix}.
$$
Since $\bar{T}=T^{-1}$, it follows that
$$
\begin{pmatrix}
|a|^2x+u&(a\bar{c}x+\bar{d}u)T^{j-1}\\
(c\bar{a}x+du)T^{1-j}&|c|^2x+|d|^2u
\end{pmatrix}
=\begin{pmatrix}
x&0\\
0&u
\end{pmatrix}.$$
Then we get
$$\left\{\begin{array}{l}
(|a|^2-1)x+u=0\\
|c|^2x+(|d|^2-1)u=0\\
a\bar{c}x+\bar{d}u=0
\end{array}\right..
$$
\end{proof}
 \begin{prop}
Let $\rho:{\mathcal{NB}}_4\to GL(2,\mathbb{C})$ be the irreducible representation of ${\mathcal{NB}}_4$ given in Proposition 2.1.($T=-1$).
Then
$\rho$ is unitary relative to a hermitian positive definite matrix $M$ if and only if
$$|a|<1$$ and
$$a=d+e^{i\theta}, 
\left\{\begin{array}{ll}
d=\delta'-\frac{1}{2}i&\text{ if }\theta=\frac{\pi}{2}\text{ mod }2\pi \\
d=\delta'+\frac{1}{2}i&\text{ if }\theta=-\frac{\pi}{2}\text{ mod }2\pi \\
d=-\frac{1}{2}\sec\theta-(\tan\theta) \delta+i\delta,& \text{ if }\theta\neq\pm\frac{\pi}{2} \text{ mod }2\pi
\end{array}\right.,
$$
where $\theta,\delta',\delta\in\mathbb{R}$ 
 \end{prop}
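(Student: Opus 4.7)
The plan is to invoke Lemma 4.1 to reduce the unitarity problem to a system of three equations in the diagonal entries $x, u$ of the candidate matrix $M = \mathrm{diag}(x,u)$, then combine these equations with the defining relation $c = a^2 - ad + d^2$ (from Proposition 2.1 with $T = -1$).

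First, I would apply Lemma 4.1 to conclude that $M$ must be diagonal, and hermiticity forces $x, u \in \mathbb{R}$. Positive definiteness means $x, u > 0$. The first equation of the lemma, $u = (1 - |a|^2)x$, then immediately forces $|a| < 1$. Next I would eliminate $u$ from the remaining two equations: the second yields $|c|^2 = (1 - |d|^2)(1 - |a|^2)$, while taking the squared modulus of the third gives $|a|^2 |c|^2 = |d|^2(1 - |a|^2)^2$. Dividing these and cancelling the nonzero factor $1 - |a|^2$ produces $|a|^2(1 - |d|^2) = |d|^2(1 - |a|^2)$, hence $|a| = |d|$.

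Using $|a|^2 = |d|^2$ (so $\bar a = |a|^2/a$) I would substitute $c = a^2 - ad + d^2$ into the third equation $a\bar c x + \bar d u = 0$ and, after multiplying through by $a$, obtain the clean identity
\[
|a|^2(a - d)^2 + ad = 0.
\]
Taking moduli and using $|a|=|d|$ gives $|a-d|^2 = 1$, so $a - d = e^{i\theta}$ for some $\theta \in \mathbb{R}$. Writing $d = x_0 + iy_0$ and expanding $|a|^2 = |d + e^{i\theta}|^2 = |d|^2$ collapses to the line equation
\[
x_0 \cos\theta + y_0 \sin\theta = -\tfrac12.
\]
Reading off $y_0$ when $\theta = \pi/2$, $y_0$ when $\theta = -\pi/2$, and solving for $x_0$ in terms of $\delta = y_0$ otherwise reproduces the three parametrizations in the statement.

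For the converse I would start from the stated parametrization, which guarantees $|a|=|d|$ and $a - d = e^{i\theta}$, and verify that the identity $|a|^2(a-d)^2 + ad = 0$ holds. The key computational step is to set $w = de^{-i\theta}$ and observe that
\[
w^2 + w + |w|^2 = (2\,\mathrm{Re}(w) + 1)\,w,
\]
which vanishes precisely because the line condition gives $\mathrm{Re}(w) = -\tfrac12$. Choosing any $x > 0$ and setting $u = (1 - |a|^2)x > 0$ then satisfies all three equations of Lemma 4.1, so $M$ is hermitian positive definite and $\rho$ is unitary relative to $M$. The main obstacle is the algebraic manipulation in the forward direction: substituting $c = a^2 - ad + d^2$ into the third equation and extracting the clean relation $|a|^2(a-d)^2 + ad = 0$, from which $|a-d| = 1$ then follows almost effortlessly via the $w^2 + w + |w|^2$ identity.
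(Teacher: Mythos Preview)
Your argument is correct and follows the same overall architecture as the paper: invoke Lemma 4.1, combine the resulting system with $c=a^2-ad+d^2$, extract $|a-d|=1$, and parametrize. The algebraic route differs in a useful way, though. The paper proceeds from $a\bar c=(|a|^2-1)\bar d$ together with $(|a|^2-1)(|d|^2-1)=|c|^2$ to the pair of relations $\bar d=-a(\bar a-\bar d)^2$ and $\bar d(a-d)^2=-a$, and then reads off $|a-d|=1$ and $a=-\bar d\,e^{2i\theta}$, finally solving $\bar d e^{2i\theta}+e^{i\theta}+d=0$. You instead first isolate $|a|=|d|$ by dividing the modulus equations, then reduce everything to the single identity $|a|^2(a-d)^2+ad=0$; the parametrization then drops out of the line $x_0\cos\theta+y_0\sin\theta=-\tfrac12$ coming from $|d+e^{i\theta}|=|d|$. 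Your path is a bit more transparent and also furnishes a cleaner converse via the factorization $w^2+w+|w|^2=(2\,\mathrm{Re}\,w+1)w$, which the paper leaves implicit. One small point worth making explicit in your write-up: Lemma 4.1 is stated only as a necessary condition, so when you invoke it in the converse direction you should note that its proof is in fact an equivalence (matching matrix entries), so that satisfying the three equations with $M=\mathrm{diag}(x,u)$ genuinely yields $\rho(g)M\rho(g)^*=M$ for all generators.
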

 
 \begin{proof}
Recall that 
$$\rho(\tau)=\left(\begin{matrix}
-t&0\\
0&t
\end{matrix}\right)~\text{and }~
\rho(\sigma_j)=\left(\begin{matrix}
a&(-1)^{j-1}\\
c(-1)^{1-j}&d
\end{matrix}\right),$$
where $t$ is an eighth root of unity and $c=a^2-ad+d^2$ such that $a\neq d$ and $c\neq0$. Suppose that $\rho$ is unitary relative to matrix $M$. By Lemma 4.1, we have $M=\left(\begin{matrix}
 x&0\\
 0&u
 \end{matrix}\right)$ and
 $$
\left\{\begin{array}{l}
(|a|^2-1)x+u=0\\
|c|^2x+(|d|^2-1)u=0\\
a\bar{c}x+\bar{d}u=0
\end{array}\right..$$
In order to have a non trivial solution of the first two equations of the system above, the determinant of the system is zero and so
\begin{equation}
(|a|^2-1)(|d|^2-1)=|c|^2.
\end{equation}
The first equation yields  
\begin{equation}
u=(1-|a|^2)x.
\end{equation}
From the the third equation of the last system, we conclude that $ad\neq0$ and 
\begin{equation}
x=-\dfrac{\bar{d}u}{a\bar{c}}
\end{equation} 
The equation (4.2) and the equation (4.3) imply that
\begin{equation}
 a\bar{c}=(|a|^2-1)\bar{d}.
\end{equation}
By using the fact $c=a^2-ad+d^2$ and the equations (4.1) and (4.4), we get
$$
\bar{d}=-a(\bar{a}-\bar{d})^2\text{ and } \bar{d}(a-d)^2=-a.
$$
Thus we get $$|a-d|^2=1\text{ and }\frac{a}{\bar{d}}=-(a-d)^2.$$
Hence, there exists $\theta\in\mathbb{R}$ such that
 $$a-d=e^{i\theta}\text{ and } a=-\bar{d}e^{2i\theta}.$$
 Therefore we get $$a=d+e^{i\theta}\text{ and }\bar{d}e^{2i\theta}+e^{i\theta}+d=0.$$
 Set $d=\delta'+i\delta$ where $\delta',\delta\in\mathbb{R}$. Then 
\begin{equation}
(1+e^{2i\theta})\delta'+i\delta(1-e^{2i\theta})+e^{i\theta}=0.
\end{equation}
To solve the equation (4.5), consider two cases: (1) $1+e^{2i\theta}=0$,  (2) $1+e^{2i\theta}\neq0$.\\
If $1+e^{2i\theta}=0$ then $d=\delta'\mp\dfrac{i}{2}$ and $a=\delta'\pm\dfrac{i}{2}$.\\
If $1+e^{2i\theta}\neq0$ then $d=-\dfrac{1}{2}\sec\theta-(\tan\theta)\delta+\delta i$.\\
Therefore we have
$$a=d+e^{i\theta}\text{ and } 
\left\{\begin{array}{ll}
d=\delta'-\frac{1}{2}i&\text{ if }\theta=\frac{\pi}{2}\text{ mod }2\pi \\
d=\delta'+\frac{1}{2}i&\text{ if }\theta=-\frac{\pi}{2}\text{ mod }2\pi \\
d=-\frac{1}{2}\sec\theta-(\tan\theta) \delta+i\delta,& \text{ if }\theta\neq\pm\frac{\pi}{2} \text{ mod }2\pi
\end{array}\right.,
$$
where $\theta,\delta',\delta\in\mathbb{R}$.\\
The condition $|a|<1$ follows directly from the equation (4.2) and the fact that $M$ is positive definite. The matrix $M$ becomes $\begin{pmatrix}
1&0\\
0&1-|a|^2
\end{pmatrix}$ up to a constant.
\end{proof}

\begin{prop}
Let $\rho:{\mathcal{NB}}_4\to GL(2,\mathbb{C})$ be the irreducible representation of ${\mathcal{NB}}_4$ given by Proposition 2.1 ($T=\pm i$).
The representation $\rho$ is unitary relative to a hermitian positive definite matrix $M$ if and only if 
$$|d|^2=\dfrac{1}{2}.$$
\end{prop}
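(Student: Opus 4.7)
The plan is to specialize Lemma 4.1 to the parameter values forced by Proposition 2.1 in the case $T=\pm i$ for $\mathcal{NB}_4$, namely $a=d\neq 0$ and $c=-d^2$, and then extract the equivalent scalar condition on $|d|$.

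First I would invoke Lemma 4.1, which already gives $M=\begin{pmatrix}x&0\\0&u\end{pmatrix}$ and the three equations
\begin{equation*}
(|a|^2-1)x+u=0,\qquad |c|^2 x+(|d|^2-1)u=0,\qquad a\bar c\, x+\bar d\, u=0.
\end{equation*}
Substituting $a=d$ and $c=-d^2$, the coefficients collapse: $|a|^2=|d|^2$, $|c|^2=|d|^4$, and $a\bar c=-|d|^2\bar d$. The third equation becomes $\bar d\bigl(u-|d|^2 x\bigr)=0$, so since $d\neq 0$ we obtain $u=|d|^2 x$.

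Next I would substitute $u=|d|^2 x$ into the remaining two equations. The first becomes $(2|d|^2-1)x=0$, and the second becomes $|d|^2(2|d|^2-1)x=0$; both reduce to the same scalar constraint. For $M$ to be a nonzero (a fortiori positive definite) hermitian matrix, $x\neq 0$, which forces $|d|^2=\tfrac{1}{2}$. This proves the necessity.

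For the converse, assuming $|d|^2=\tfrac{1}{2}$, I would exhibit an explicit $M$: take $x=1$ and $u=|d|^2=\tfrac{1}{2}$, so $M=\begin{pmatrix}1&0\\0&1/2\end{pmatrix}$, which is manifestly hermitian positive definite, and check by direct substitution that all three equations of Lemma 4.1 hold. By the computation in the proof of Lemma 4.1, these equations are equivalent to $\rho(\tau)M\rho(\tau)^*=M$ and $\rho(\sigma_j)M\rho(\sigma_j)^*=M$ for $j=1,\dots,4$, establishing unitarity relative to $M$. There is no real obstacle here: the key observation is simply that the relation $c=-d^2$ forces the three equations to reduce to the single scalar condition $2|d|^2=1$, and the positivity of $M$ is automatic from $x,u>0$.
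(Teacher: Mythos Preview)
Your proof is correct and follows essentially the same approach as the paper: specialize Lemma~4.1 to $a=d$, $c=-d^2$ and solve the resulting diagonal system. Your algebra is organized slightly differently (you eliminate $u$ via the third equation first, whereas the paper sets the determinant of the first two equations to zero), and you are more explicit about the converse direction, but the argument is the same in substance.
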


\begin{proof}
Recall that 
$$\rho(\tau)=\left(\begin{matrix}
\pm it&0\\
0&t
\end{matrix}\right)~\text{and }~
\rho(\sigma_j)=\left(\begin{matrix}
d&(\pm i)^{j-1}\\
-d^2(\pm i)^{1-j}&d
\end{matrix}\right),$$
where $t$ is an eighth root of unity and $d\neq0$.\\
Suppose that $\rho$ is unitary relative to hermitian positive definite matrix $M$. Then, by Lemma 4.1, we have $M=\left(\begin{matrix}
 x&0\\
 0&u
 \end{matrix}\right)$ and
 $$
\left\{\begin{array}{l}
(|a|^2-1)x+u=0\\
|c|^2x+(|d|^2-1)u=0\\
a\bar{c}x+\bar{d}u=0
\end{array}\right..$$
Since $a=d$ and $c=-d^2$, it follows that
$$\left\{\begin{array}{l}
(|d|^2-1)x+u=0\\
|d|^4x+(|d|^2-1)u=0\\
-d\bar{d}^2x+\bar{d}u=0
\end{array}\right..$$
In order to have a non trivial solution of the first two equations of the last system, the determinant of the system is zero and so $$(|d|^2-1)(|d|^2-1)=|d|^4.$$
The first and third equations of the last system imply that $$x=\frac{u}{d\bar{d}}=\frac{u}{1-|d|^2}$$
Then we get 
$$|d|^2=\frac{1}{2}$$
and the matrix $M$ becomes $\begin{pmatrix}
2&0\\
0&1
\end{pmatrix}$ up to a constant.
\end{proof}
\begin{prop}
Let $\rho:{\mathcal{NB}}_3\to GL(2,\mathbb{C})$ be the irreducible representation of ${\mathcal{NB}}_4$ that is defined, in  proposition 2.1 ($T=e^{\pm2\pi i/3}$).
The representation $\rho$ is unitary relative to a hermitian positive definite matrix $M$ if and only if $|d|<1$ and
$$c=\dfrac{\omega d(|d|^2-1)}{\bar{d}}.$$
\end{prop}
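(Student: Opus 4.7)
The plan is to apply Lemma 4.1 directly, using the extra information that for the $n=3$ case we have $a = \omega d$, which simplifies the three equations considerably.

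First, I would invoke Lemma 4.1, which already forces $M$ to be diagonal, $M = \mathrm{diag}(x,u)$ with $x,u \in \mathbb{R}$, and gives the system
\begin{equation*}
(|a|^2 - 1)x + u = 0, \qquad |c|^2 x + (|d|^2 - 1)u = 0, \qquad a\bar{c}x + \bar{d}u = 0.
\end{equation*}
Substituting $a = \omega d$ and using $|\omega| = 1$, the first equation becomes $(|d|^2 - 1)x + u = 0$, so $u = (1 - |d|^2)x$.

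Next I would feed $u = (1-|d|^2)x$ into the third equation $\omega d\,\bar{c}\,x + \bar{d}(1-|d|^2)x = 0$. Assuming $x \neq 0$ (otherwise $M$ would fail to be positive definite), this yields $\bar{c} = \bar{d}(|d|^2-1)/(\omega d)$, and taking conjugates (with $\bar{\omega} = 1/\omega$) gives precisely
\begin{equation*}
c = \frac{\omega d(|d|^2 - 1)}{\bar{d}}.
\end{equation*}
I would then verify that the second equation is automatically satisfied: the stated formula gives $|c|^2 = (|d|^2-1)^2$, which makes $|c|^2 x + (|d|^2-1)u = (|d|^2-1)^2 x - (|d|^2-1)^2 x = 0$, so no extra constraint arises.

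For positive definiteness of the diagonal hermitian matrix $M = \mathrm{diag}(x,u)$, I need $x > 0$ and $u > 0$ (up to rescaling). Since $u = (1-|d|^2)x$, this is equivalent to $|d| < 1$; choosing $x = 1$ yields $M = \mathrm{diag}(1,\,1-|d|^2)$. For the converse, I would simply check that when $|d| < 1$ and $c$ is given by the displayed formula, this explicit $M$ satisfies $\rho(\tau)M\rho(\tau)^* = M$ (trivial, since $M$ is diagonal and $\rho(\tau)$ is diagonal with unit-modulus entries) and $\rho(\sigma_j)M\rho(\sigma_j)^* = M$ (a direct matrix computation using $|c|^2 = (1-|d|^2)^2$ and the derived relation between $c$, $d$, $\omega$). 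I don't anticipate a genuine obstacle here; the only subtlety is keeping track of the complex conjugates of $\omega$ via $\bar{\omega} = \omega^{-1}$, and remembering to convert the constraint $x > 0$, $u > 0$ into the inequality $|d| < 1$.
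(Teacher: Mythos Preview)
Your proposal is correct and follows essentially the same approach as the paper: invoke Lemma~4.1 to reduce to a diagonal $M$ and the three scalar equations, substitute $a=\omega d$ with $|\omega|=1$, solve for $c$ from the first and third equations, and read off $|d|<1$ from positive definiteness of $M=\mathrm{diag}(1,1-|d|^2)$. Your write-up is in fact slightly more thorough than the paper's, since you explicitly verify that the second equation is automatically satisfied and you spell out the converse direction.
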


\begin{proof}
Recall that
$$\rho(\tau)=\left(\begin{matrix}
e^{\pm2\pi i/3}t&0\\
0&t
\end{matrix}\right)~\text{and }~
\rho(\sigma_j)=\left(\begin{matrix}
\omega d&e^{\pm(j-1)2\pi i/3}\\
e^{\pm(1-j)2\pi i/3}c&d
\end{matrix}\right),$$
where $t$ is a sixth root of unity and $\omega=e^{\pm i\pi/3}$ such that $c\neq0$.\\
Suppose that $\rho$ is unitary relative to a matrix $M$. Then, by Lemma 4.1, we have $M=\left(\begin{matrix}
 x&0\\
 0&u
 \end{matrix}\right)$  and
 $$
\left\{\begin{array}{l}
(|a|^2-1)x+u=0\\
|c|^2x+(|d|^2-1)u=0\\
a\bar{c}x+\bar{d}u=0
\end{array}\right..$$
Since $a=\omega d$, it follows that
$$\left\{\begin{array}{l}
(|d|^2-1)x+u=0\\
|c|^2x+(|d|^2-1)u=0\\
\omega d\bar{c}x+\bar{d}u=0
\end{array}\right..$$
So
\begin{equation}
x=-\frac{\bar{d}u}{\omega d\bar{c}}=\frac{u}{1-|d|^2}
\end{equation}
Since $\omega^{-1}=\bar{\omega}$, it follows that 
$$c=\frac{\omega d(|d|^2-1)}{\bar{d}}.$$
The condition $|d|<1$ follows from the equation (4.6) and the fact that $M$ is a positive definite.
The matrix $M$ becomes
$\left(
\begin{matrix}
1&0\\
0&1-|d|^2
\end{matrix}\right)$
up to a constant.
\end{proof}

\end{document}